\theoremstyle{plain}
\newtheorem{theorem}{Theorem}
\newtheorem{lemma}[theorem]{Lemma}
\newtheorem{corollary}[theorem]{Corollary}
\newtheorem{proposition}[theorem]{Proposition}
\newtheorem{conjecture}[theorem]{Conjecture}
\newtheorem{problem}[theorem]{Problem}
\theoremstyle{definition}
\newtheorem{example}{Example}
\theoremstyle{remark}
\newcommand{\arxiv}[2]{\href{https://arxiv.org/abs/#1}{\texttt{arXiv:#1}} \texttt{[#2]}}
\newcommand{\doi}[1]{\url{https://doi.org/#1}}
\DeclareMathOperator{\beg}{beg}
\DeclareMathOperator{\inv}{inv}
\tikzset{
    edge/.style={-{Latex[scale=1.2]}},
}
\def\dj{d\kern-0.4em\char"16\kern-0.1em}
\def\Dj{\hbox{\raise0.3ex\hbox{-}\kern-0.4em  D}}
\title{On cubic polycirculant nut graphs}
\author[1,2,3]{Nino Bašić}
\author[1,4,5]{Ivan Damnjanović}
\affil[1]{FAMNIT, University of Primorska, Koper, Slovenia}
\affil[2]{IAM, University of Primorska, Koper, Slovenia}
\affil[3]{Institute of Mathematics, Physics and Mechanics, Ljubljana, Slovenia}
\affil[4]{Faculty of Electronic Engineering, University of Niš, Niš, Serbia}
\affil[5]{Diffine LLC, San Diego, California, USA}
\date{}
\begin{document}

\maketitle

\begin{abstract}
A nut graph is a nontrivial simple graph whose adjacency matrix contains a one-dimensional null space spanned by a vector without zero entries. Moreover, an $\ell$-circulant graph is a graph that admits a cyclic group of automorphisms having $\ell$ vertex orbits of equal size. It is not difficult to observe that there exists no cubic $1$-circulant nut graph or cubic $2$-circulant nut graph, while the full classification of all the cubic $3$-circulant nut graphs was recently obtained [\emph{Electron.\ J.\ Comb.\/}\ \textbf{31(2)} (2024), \#2.31]. Here, we investigate the existence of cubic $\ell$-circulant nut graphs for $\ell \ge 4$ and show that there is no cubic $4$-circulant nut graph or cubic $5$-circulant nut graph by using a computer-assisted proof. Furthermore, we rely on a  construction based approach in order to demonstrate that there exist infinitely many cubic $\ell$-circulant nut graphs for any fixed $\ell \in \{6, 7 \}$ or $\ell \ge 9$.
\end{abstract}

\bigskip\noindent
{\bf Keywords:} nut graph, polycirculant graph, cubic graph, pregraph, voltage graph.

\bigskip\noindent
{\bf Mathematics Subject Classification:} 05C50, 05C25, 11C08.

\section{Introduction}

A \emph{nut graph} is a nontrivial simple graph such that its adjacency matrix has a one-dimensional null space all of whose nonzero vectors contain no zero entry. These graphs were introduced by Sciriha and Gutman \cite{Sciriha1997, Sciriha1998_A, Sciriha1998_B, ScGu1998, Sciriha1999} and subsequently investigated in a series of papers \cite{Sciriha2007, Sciriha2008, FoGaGoPiSc2020, GaPiSc2023}. The chemical justification for studying nut graphs can be found in \cite{ScFo2007, ScFo2008, FoPiToBoSc2014, CoFoGo2018, FoPiBa2021} and for more results concerning them, see \cite{ScFa2021}. Furthermore, for any $\ell \in \mathbb{N}$, an \emph{$\ell$-circulant graph} is a graph that admits a cyclic group of automorphisms with $\ell$ vertex orbits of equal size. If $\ell = 1$, we will then omit this parameter and just refer to the $1$-circulant graphs as the circulant graphs.

The problem of finding all the orders that a $d$-regular nut graph can have for a fixed $d \in \mathbb{N}$ was first considered in \cite{GaPiSc2023}, where it was solved for $d \le 4$. This result was further improved in \cite{FoGaGoPiSc2020}, where the aforementioned existence problem for resolved up to $d \le 11$. Afterwards, a circulant graph based construction was applied \cite{BaKnSk2022} in order to show that there exists a $12$-regular nut graph of order $n \in \mathbb{N}$ if and only if $n \ge 16$. In a subsequent series of papers \cite{DaSt2022, Damnjanovic2023_FIL, Damnjanovic2024_AMC}, the circulant nut graph order--degree existence problem was fully resolved via the following result.

\begin{theorem}[\hspace{1sp}{\cite[Theorem 5]{Damnjanovic2024_AMC}}]\label{circulant_nut_th}
For each $n \in \mathbb{N}, d \in \mathbb{N}_0$, there exists a $d$-regular circulant nut graph of order $n$ if and only if $d > 0$, $4 \mid d$, $2 \mid n$, alongside $n \ge d + 4$ if $d \equiv_8 4$, and $n \ge d + 6$ if $8 \mid d$, as well as $(n, d) \neq (16, 8)$.
\end{theorem}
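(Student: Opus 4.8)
The plan is to work entirely in the circulant framework. Write the graph as $\Circ(n; S)$ for a symmetric connection set $S = -S \subseteq \Z_n \setminus \{0\}$ with $|S| = d$, and recall that its adjacency matrix has eigenvalues $\lambda_j = \sum_{s \in S} \omega^{js}$ for $j = 0, 1, \ldots, n-1$, where $\omega = e^{2\pi i / n}$, the eigenvector for index $j$ being $(\omega^{0}, \omega^{j}, \ldots, \omega^{(n-1)j})^{\top}$. Encoding $S$ by the polynomial $P_S(x) = \sum_{s \in S} x^{s}$, we get $\lambda_j = P_S(\omega^j)$, so the kernel is governed by which $n$-th roots of unity are roots of $P_S$. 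Since the adjacency matrix is real symmetric, each $\lambda_j$ is real and $\lambda_{n-j} = \lambda_j$; a vanishing $\lambda_j$ with $j \notin \{0, n/2\}$ would therefore be accompanied by $\lambda_{n-j} = 0$ and yield a two-dimensional real kernel. Hence a one-dimensional kernel forces the unique vanishing index to be self-paired, i.e. $j \in \{0, n/2\}$; as $\lambda_0 = d > 0$, only $j = n/2$ survives, which already requires $2 \mid n$ and produces the nowhere-zero real kernel eigenvector $(1, -1, 1, -1, \ldots)^{\top}$.

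This reduces the nut condition to a clean divisibility statement: $\Circ(n; S)$ is a nut graph precisely when $-1$ is a root of $P_S$ while no other $n$-th root of unity is. Factoring $x^n - 1 = \prod_{e \mid n} \Phi_e(x)$ into cyclotomic polynomials and using their irreducibility over $\mathbb{Q}$ together with $P_S \in \Z[x]$, this becomes: $\Phi_2 \mid P_S$ while $\Phi_e \nmid P_S$ for every other divisor $e \mid n$. For the first necessity claims I would evaluate $P_S(-1) = \sum_{s \in S}(-1)^s$. Because $n$ is even, each inverse pair $\{s, n - s\} \subseteq S$ consists of two elements of equal parity, whereas the only self-paired element $n/2$ would contribute an odd amount; hence $n/2 \notin S$, the degree $d$ is even, and $P_S(-1) = 0$ forces equally many even-parity and odd-parity pairs, giving $4 \mid d$. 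This settles the necessity of $d > 0$, $4 \mid d$, $2 \mid n$.

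For the order bounds I would first extract the crude combinatorial lower bound from pair counting: the $d/2$ pairs must split into $d/4$ of each parity, drawn from the $n/2 - 1$ available pairs with representatives in $\{1, \ldots, n/2 - 1\}$, whose parity census depends on $n \bmod 4$. The sharper thresholds $n \ge d + 4$ for $d \equiv_8 4$ and $n \ge d + 6$ for $8 \mid d$, together with the lone exception $(n, d) = (16, 8)$, must then come from combining this census with the requirement $\Phi_e \nmid P_S$ for all $e \ne 2$: near the boundary the pigeonhole leaves so little freedom that every admissible parity-balanced symmetric set is forced to be divisible by some further cyclotomic factor (typically $\Phi_4$ or $\Phi_8$, depending on $n \bmod 8$), creating an extra kernel dimension. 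I would prove these impossibility statements case by case on the residue $n \bmod 8$ that controls the relevant cyclotomic factors, and dispatch $(16, 8)$ by a direct finite verification.

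Sufficiency is the main obstacle and the real heart of the theorem. Here the goal is to exhibit, for every admissible $(n, d)$, a symmetric parity-balanced set $S$ with $\gcd(P_S(x), x^n - 1) = \Phi_2(x)$ over $\mathbb{Q}$. My plan is to give explicit parametric families of connection sets together with a lifting step: starting from minimal base cases for each residue class of $d$, I would show that a suitable local modification of $S$ increases $n$ by $2$ while preserving both $d$-regularity and the gcd condition, so that induction covers all even $n$ above the threshold. The delicate part is verifying that the constructed $P_S$ is annihilated by $\Phi_2$ yet by no other $\Phi_e$ with $e \mid n$; I expect this to require evaluating $P_S$ at each relevant root of unity and exploiting a near-interval or arithmetic-progression structure of the chosen $S$ so that the character sums telescope or factor transparently. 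Matching the constructions exactly to the thresholds of the necessity direction, and treating the finitely many boundary orders separately, is where the bulk of the technical work lies.
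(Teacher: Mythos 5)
First, a remark on scope: Theorem~\ref{circulant_nut_th} is not proved in this paper at all --- it is imported verbatim from \cite[Theorem 5]{Damnjanovic2024_AMC}, the endpoint of the series \cite{DaSt2022, Damnjanovic2023_FIL, Damnjanovic2024_AMC} --- so there is no in-paper proof to compare against. Judged on its own terms, your opening reduction is correct and is exactly the standard framework those papers (and Sections~\ref{sc_7_circ} and~\ref{sc_11_circ} of the present paper, in the quotient setting) use: a circulant $\Circ(n;S)$ with $0 \notin S = -S$ is a nut graph if and only if $P_S(x) = \sum_{s \in S} x^s$ vanishes at $-1$ and at no other $n$-th root of unity, since any vanishing eigenvalue at an index $j \notin \{0, n/2\}$ is mirrored at $n-j$ and would force $\dim\mathcal{N} \ge 2$, while the index $n/2$ carries the full eigenvector $(1,-1,1,-1,\ldots)^{\top}$. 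Your parity argument correctly yields $2 \mid n$, $n/2 \notin S$, and $4 \mid d$.

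The genuine gap is that everything beyond this point is a plan rather than a proof, and the omitted parts are precisely where the theorem is hard. For the necessity of the sharp thresholds you assert that ``near the boundary the pigeonhole leaves so little freedom that every admissible parity-balanced symmetric set is forced to be divisible by some further cyclotomic factor (typically $\Phi_4$ or $\Phi_8$)'', but you do not exhibit the case analysis, and it is not obvious: e.g.\ ruling out a $d$-regular circulant nut graph of order $d+4$ when $8 \mid d$, and isolating the single sporadic exception $(n,d) = (16,8)$, requires a nontrivial argument about which cyclotomic factors can be avoided by a balanced set occupying all but one or two of the available inverse pairs --- the mere parity census of the pairs in $\{1,\ldots,n/2-1\}$ does not suffice. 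For sufficiency you propose ``explicit parametric families together with a lifting step'' that increases $n$ by $2$ while preserving $\gcd(P_S, x^n - 1) = \Phi_2$, but no family is written down and no such modification is verified; controlling $P_S(\zeta) \neq 0$ simultaneously for all primitive $e$-th roots of unity with $e \mid n$, $e \neq 2$, as $n$ varies is the content that occupied three papers in the literature. As submitted, the proposal establishes only the conditions $d>0$, $4 \mid d$, $2 \mid n$ and correctly identifies the right polynomial criterion; the order bounds, the exceptional pair, and the entire existence direction remain unproved.
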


For more results on circulant and vertex-transitive nut graphs, see, e.g., \cite{BaFo2024, BaFoPi2024, Damnjanovic2023_ARX, Damnjanovic2024_DMC}. As a direct consequence of Theorem \ref{circulant_nut_th}, it follows that there is no cubic circulant nut graph and it is also straightforward to see that there exists no cubic $2$-circulant nut graph (see, e.g., \cite[Proposition 9]{DaBaPiZi2024}). In a recent paper \cite[Theorem 2]{DaBaPiZi2024}, the full classification of cubic $3$-circulant nut graphs was disclosed, thus showing that there are infinitely many such graphs. Here, we investigate the existence of cubic $\ell$-circulant nut graphs for $\ell \ge 4$, with the main result given in the next theorem.

\begin{theorem}\label{main_th}
    For any $\ell \in \{1, 2, 4, 5 \}$, there exists no cubic $\ell$-circulant nut graph, while for each $\ell \in \{3, 6, 7 \}$ or $\ell \ge 9$, there exist infinitely many cubic $\ell$-circulant nut graphs.
\end{theorem}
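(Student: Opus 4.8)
The cases $\ell\in\{1,2\}$ and $\ell=3$ are already disposed of by the cited results (Theorem~\ref{circulant_nut_th} forces $4\mid d$, which rules out $d=3$ and hence $\ell=1$; the case $\ell=2$ is \cite[Proposition~9]{DaBaPiZi2024}; and $\ell=3$ is the classification \cite[Theorem~2]{DaBaPiZi2024}), so the plan is to concentrate on the four genuinely new regimes: nonexistence for $\ell\in\{4,5\}$ and infinitude for $\ell\in\{6,7\}$ and $\ell\ge 9$. The unifying device I would use is the standard description of a connected cubic $\ell$-circulant graph as a regular cyclic $\Z_m$-cover of a cubic \emph{pregraph} $B$ on $\ell$ vertices: the semiregular cyclic automorphism with $\ell$ orbits of common length $m$ is precisely the deck group, and the quotient is a cubic voltage graph with voltages in $\Z_m$ carried by ordinary darts, loops, and semi-edges. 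First I would set this correspondence up carefully, recording in particular that a semi-edge must carry an involutive voltage, so that the entire problem reduces to classifying voltage assignments on cubic pregraphs with $\ell$ vertices whose cover is a nut graph.

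The engine is the character decomposition of the cover's adjacency matrix. For each $m$-th root of unity $\omega=\zeta^j$ with $\zeta=e^{2\pi i/m}$ one obtains an $\ell\times\ell$ Hermitian \emph{voltage matrix} $M(\omega)$, satisfying $\ker A(G)=\bigoplus_{\omega^m=1}\ker M(\omega)$. Since $M(\bar\omega)=\overline{M(\omega)}$, the blocks at $\omega$ and $\bar\omega$ share their nullity, so a one-dimensional total kernel can only arise from a self-conjugate block, forcing the singular parameter to be $\omega=1$ or, when $2\mid m$, $\omega=-1$. I would therefore rephrase the nut property as: exactly one block is singular, it is $M(1)$ or $M(-1)$ with nullity one, every other block is nonsingular, and the corresponding $\ell$-vector in its kernel has no zero entry. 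The last clause is clean because the lift of such a vector is constant (for $\omega=1$) or sign-alternating (for $\omega=-1$) along each fibre, so nowhere-zero-ness is inherited verbatim from the $\ell$-vector.

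With this reformulation, nonexistence for $\ell\in\{4,5\}$ becomes a finite problem: enumerate all cubic pregraphs on $4$ and on $5$ vertices and, for each, show that no voltage assignment can make a single self-conjugate block singular with nowhere-zero kernel while keeping all remaining blocks nonsingular. This is the computer-assisted part; by hand I would aim to see that the scarcity of darts forces either even nullity, or a cyclotomic factor of $\det M(\omega)$ that produces extra singular blocks for infinitely many $m$, or else a structurally zero entry in every candidate kernel vector. For the existence results I would instead exhibit, for each target $\ell$, an explicit base pregraph together with a voltage scheme depending on a parameter $m$, arranged so that $\det M(\omega)$ factors as a unit times $(\omega-1)$ (or $(\omega+1)$) times a Laurent polynomial $p_\ell(\omega)$ with no root that is a root of unity of the admissible orders; then the designated block is the unique singular one for every admissible $m$, its kernel is forced nowhere-zero by the geometry of $B$, and letting $m\to\infty$ along a suitable residue class yields infinitely many graphs. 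A small library of base pregraphs (a fixed gadget augmented by repeatable blocks) should handle all $\ell\ge 9$ uniformly, with $\ell\in\{6,7\}$ and a few small residues treated as separate base cases.

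I expect the main obstacle to be the ``all other blocks nonsingular'' requirement in the construction: guaranteeing $\det M(\omega)\neq 0$ for every $m$-th root of unity $\omega\neq 1$ is a genuinely number-theoretic condition, amounting to showing that the nontrivial factor $p_\ell(\omega)$ shares no root with any relevant cyclotomic polynomial. Controlling this for an entire infinite family of orbit sizes $m$, rather than for one fixed graph, is where the cyclotomic analysis will do the real work, and it is presumably also what dictates the conspicuous exclusion of $\ell=8$ from the positive list, which I would not attempt to force into the uniform family.
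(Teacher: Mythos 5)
Your setup—quotient cubic pregraphs with $\Z_m$-voltages, the block decomposition $\ker A(G)=\bigoplus_{\omega^m=1}\ker M(\omega)$, the observation that the unique singular block must sit at $\omega=\pm1$, and the transfer of nowhere-zero-ness to the fibre-constant or fibre-alternating lift—is sound and matches the paper's framework. In particular, your finite test at $\omega=1$ and $\omega=-1$ is essentially the paper's Propositions~\ref{test_1_prop} and~\ref{test_2_prop} (the latter phrased there via orbit magnitudes and row signings of $A(X)$), which the paper likewise delegates to a computer over the $12$ and $22$ quotient pregraphs of orders $4$ and $5$; and the cyclotomic analysis you envisage for the explicit families is exactly what the paper carries out for $\ell=7$ and $\ell=11$.

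The genuine gap is in the passage from finitely many base cases to \emph{all} $\ell\in\{6,7\}\cup\{\ell\ge9\}$. You assert that ``a fixed gadget augmented by repeatable blocks should handle all $\ell\ge 9$ uniformly,'' but that is precisely the nontrivial step: inserting a repeatable block changes the voltage matrix, so your condition that the nontrivial factor $p_\ell(\omega)$ avoids all relevant roots of unity would have to be re-established separately for every $\ell$, and nothing in your argument makes it propagate. The paper's solution is a structural lemma (the pre-subdivision construction, Lemma~\ref{presub_lemma}): an edge of the quotient joining two orbits of \emph{different magnitudes} may be replaced by a path of three new orbits carrying semi-edges of voltage $n/2$, and the derived graph remains a nut graph—the magnitude hypothesis is exactly what forces the new entries $u(d_j)=-u(a_{j+\frac{n}{2}-\gamma})-u(b_j)$ to be nonzero, and no new cyclotomic condition needs to be checked at any stage. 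Without this (or an equivalent nut-preserving, $\ell$-increasing operation together with the invariant it requires) your plan has no mechanism for the infinite range of $\ell$. Relatedly, you must exhibit base families in each residue class modulo the step size: the paper starts from $\ell=3$, $\ell=7$ and $\ell=11$ and steps by $3$, which covers $\{3,6,7\}\cup\{\ell\ge 9\}$ and explains why $8$ is skipped—not, as you suggest, because of a failure of cyclotomic control in a uniform family, but because $8\equiv 2\pmod 3$ and the smallest admissible base in that class is $11$ (the computations in fact suggest no cubic $8$-circulant nut graph exists at all). Your proposal identifies neither these base cases nor the adjacent-orbits-of-different-magnitudes invariant that must be verified for them and preserved under iteration.
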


Throughout the rest of the paper, our focus will be to prove Theorem~\ref{main_th}. In Section \ref{sc_prel} we will introduce the notation to be used in the subsequent sections and we will also preview certain basic theoretical facts from graph theory and linear algebra. Section \ref{sc_maga} will serve to elaborate a technique that forms the basis of a computed-assisted search which can be applied to disprove the existence of various cubic polycirculant nut graphs. The main result of this section will be the proof of the nonexistence of cubic $\ell$-circulant nut graphs for $\ell = 4$ or $\ell = 5$.

Afterwards, Sections \ref{sc_7_circ} and \ref{sc_11_circ} will be used to demonstrate the existence of infinitely many cubic $7$-circulant nut graphs and cubic $11$-circulant nut graphs, respectively. Section \ref{sc_subdiv} will subsequently show how, under certain conditions, a given cubic $\ell$-circulant nut graph can be extended to a cubic $(\ell+3)$-circulant graph that is also a nut graph, thus completing the proof of Theorem \ref{main_th}. In Section \ref{sc_conclusion} we will disclose some open existence problems regarding the cubic polycirculant nut graphs, while the \texttt{SageMath} \cite{SageMath} code used to perform the computer-assisted search from Section \ref{sc_maga} can be found in \cite{GitHub}.

\section{Preliminaries}\label{sc_prel}

Although nut graphs are necessarily simple graphs and, as such, contain no loops or parallel edges, in order to be able to concisely represent polycirculant graphs, it is convenient to rely on pregraphs, which are allowed to have loops, parallel edges or even semi-edges. As done so in \cite{MaMaPo2004}, we will take a \emph{pregraph} to be an ordered quadruple $X = (D, V; \beg, \inv)$ where $D$ and $V \neq \varnothing$ are the disjoint finite sets of darts and vertices, respectively, $\beg \colon D \to V$ is the mapping which assigns the initial vertex $\beg a$ to each dart $a$, while $\inv \colon D \to D$ is the involution that interchanges each dart $a$ with its inverse dart $\inv a$. Thus, the \emph{adjacency matrix} $A(X)$ of the pregraph $X$ can naturally be defined as the symmetric $\mathbb{R}^{V(X) \times V(X)}$ matrix for which $A(X)_{xy}$ equals the number of darts $a \in D(X)$ such that $\beg a = x$ and $\beg (\inv a) = y$. Also, for each real matrix $B$, we will use $|B|$ to denote the matrix containing the absolute values of its corresponding entries.

Now, for a given group $(\Gamma, *)$, we will take a \emph{$\Gamma$-voltage pregraph} to be an ordered pair $(X, \varphi)$ where $X = (D, V; \beg, \inv)$ is a pregraph, while $\varphi \colon D \to \Gamma$ is a mapping such that $\varphi(\inv a) = \left(\varphi(a)\right)^{-1}$. Moreover, we will say that the aforementioned voltage pregraph gives rise to the pregraph $X_1 = (D_1, V_1; \beg_1, \inv_1)$ such that
\begin{enumerate}[label=\textbf{(\roman*)}]
    \item $V_1 = V \times \Gamma$ and $D_1 = D \times \Gamma$;
    \item $\beg_1(a, \gamma) = (\beg a, \gamma)$ for each $a \in D, \gamma \in \Gamma$;
    \item $\inv_1(a, \gamma) = (\inv a, \varphi(a) * \gamma)$ for each $a \in D, \gamma \in \Gamma$.
\end{enumerate}
We will also refer to $X_1$ as the \emph{derived graph} of the $\Gamma$-voltage pregraph $(X, \varphi)$. For further insight into the theory regarding pregraphs and voltage graphs, please refer to \cite[Section 3.5]{PiSe2013} and \cite{MaMaPo2004, MaNeSko2000, Pisanski2007, PoTo2020}.

A \emph{simple graph} can now be viewed as a pregraph without semi-edges, loops or parallel edges. For a given simple graph (or pregraph) $X$, we will use $V(X)$, $A(X)$ and $\mathcal{N}(X)$ to denote its set of vertices, adjacency matrix and the null space of its adjacency matrix, respectively. Furthermore, the next lemma contains a key result that we shall consistently make use of.

\begin{lemma}[\hspace{1sp}{\cite[Lemma 4]{DaBaPiZi2024}}]\label{local_condition_lemma}
For any simple graph $G$ and an arbitrary vector $u \in \mathbb{R}^{V(G)}$, we have that $u \in \mathcal{N}(G)$ holds if and only if
\begin{equation}\label{local_cond}
    \sum_{y \sim x} u(y) = 0
\end{equation}
is satisfied for every vertex $x \in V(G)$.
\end{lemma}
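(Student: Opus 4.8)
The plan is to prove the equivalence by directly unwinding the definition of the null space in terms of the matrix--vector product $A(G)u$, reading off each coordinate, and then exploiting the combinatorial meaning of the entries of the adjacency matrix of a \emph{simple} graph. Concretely, by definition $u \in \mathcal{N}(G)$ means precisely that $A(G)\,u = \mathbf{0}$, i.e.\ that every coordinate of the vector $A(G)\,u \in \mathbb{R}^{V(G)}$ vanishes. Hence the whole statement reduces to identifying, for each fixed vertex $x \in V(G)$, the $x$-th coordinate of this product with the left-hand side of the condition \eqref{local_cond}.

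To carry this out, I would first write the $x$-th coordinate explicitly as
\begin{equation*}
    \left(A(G)\,u\right)(x) = \sum_{y \in V(G)} A(G)_{xy}\, u(y),
\end{equation*}
and then invoke the hypothesis that $G$ is a simple graph. Since $G$ has no loops, no parallel edges and no semi-edges, its adjacency matrix is a $0/1$ matrix with $A(G)_{xx} = 0$ for all $x$, and with $A(G)_{xy} = 1$ if and only if $y \sim x$. Substituting these values makes every term with $y \not\sim x$ drop out and leaves each surviving term with coefficient $1$, so that the sum collapses to $\left(A(G)\,u\right)(x) = \sum_{y \sim x} u(y)$.

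With this identification in hand, the equivalence is immediate: $u \in \mathcal{N}(G)$ holds if and only if $\left(A(G)\,u\right)(x) = 0$ for every $x \in V(G)$, which by the computation above is exactly the assertion that \eqref{local_cond} is satisfied at every vertex $x$. I would record both directions of the biconditional at once, since they are simultaneously witnessed by the coordinatewise equality of the two vectors $A(G)\,u$ and $\mathbf{0}$.

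There is no genuine obstacle here; the result is essentially definitional, and the only point requiring any care is making explicit why the row sum $\sum_{y} A(G)_{xy} u(y)$ restricts to a sum over neighbors of $x$ — namely the simplicity of $G$, which guarantees that the adjacency matrix entries are precisely the $\{0,1\}$-valued indicators of the neighbor relation with a zero diagonal. I would therefore keep the proof short, emphasising only this reduction from the global identity $A(G)\,u=\mathbf{0}$ to the family of local conditions indexed by the vertices.
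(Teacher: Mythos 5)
Your proof is correct: the lemma is essentially definitional, and your reduction of $A(G)\,u=\mathbf{0}$ to the coordinatewise identities $\sum_{y\sim x}u(y)=0$, using that the adjacency matrix of a simple graph is the $\{0,1\}$-indicator of adjacency with zero diagonal, is exactly the standard argument. Note that the paper itself gives no proof here — it imports the statement from the cited reference — so there is nothing to contrast your approach with; what you wrote is the expected one-line verification.
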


Equation~\eqref{local_cond} is usually referred to as the \emph{local condition} corresponding to the vertex $x$. We will call a vector without zero entries a \emph{full vector}, and otherwise, we will call it a \emph{nonfull vector}. We now present two lemmas that deal with the well-known properties of nut graphs.

\begin{lemma}[\hspace{1sp}{\cite{ScGu1998}}]\label{basic_nut_lemma}
    Each nut graph is connected, nonbipartite and leafless.
\end{lemma}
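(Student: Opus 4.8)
The plan is to fix a nut graph $G$ together with a full vector $u$ spanning its one-dimensional null space $\mathcal{N}(G)$, and to derive each of the three properties from Lemma~\ref{local_condition_lemma} by exhibiting, in each failing case, either a vertex whose local condition is violated or a second kernel vector that is linearly independent from $u$.

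For leaflessness, I would first rule out isolated vertices: if $x$ has degree $0$, then the standard basis vector $e_x$ satisfies the (vacuous) local condition at $x$ and every other local condition trivially, so $e_x \in \mathcal{N}(G)$; being nonfull, it cannot be a scalar multiple of the full vector $u$, contradicting $\dim \mathcal{N}(G) = 1$. Next, if $x$ has degree $1$ with unique neighbor $y$, then the local condition \eqref{local_cond} at $x$ reads $u(y) = 0$, contradicting that $u$ is full. Hence every vertex has degree at least $2$, so $G$ is leafless. For connectedness, I would suppose $G$ has two distinct components $C_1$ and $C_2$. Since the local conditions at the vertices of $C_i$ involve only vertices of $C_i$, the restriction of $u$ to $C_i$ extended by zero elsewhere again lies in $\mathcal{N}(G)$ by Lemma~\ref{local_condition_lemma}, and it is nonzero because $u$ is full. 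These two vectors have disjoint supports and are therefore linearly independent, forcing $\dim \mathcal{N}(G) \ge 2$, a contradiction.

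For nonbipartiteness, I would assume a bipartition $V(G) = X \sqcup Y$ and define $v$ by $v(x) = u(x)$ for $x \in X$ and $v(x) = -u(x)$ for $x \in Y$. Checking \eqref{local_cond}: every neighbor of a vertex in $X$ lies in $Y$ and vice versa, so the sign pattern yields $\sum_{y \sim x} v(y) = \pm \sum_{y \sim x} u(y) = 0$ at each vertex, whence $v \in \mathcal{N}(G)$. Since $G$ is leafless and nontrivial it has an edge, so both $X$ and $Y$ are nonempty; comparing $v$ and $u$ on a vertex of $X$ forces any scalar relation $v = \lambda u$ to give $\lambda = 1$, while a vertex of $Y$ forces $\lambda = -1$. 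Thus $v$ and $u$ are linearly independent, again contradicting $\dim \mathcal{N}(G) = 1$.

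I expect the nonbipartite case to be the most delicate step, chiefly because its contradiction relies on both parts of the bipartition being nonempty; this is exactly where leaflessness (and in particular the absence of isolated vertices) must be invoked. For this reason the three assertions are best established in the order leafless, connected, nonbipartite, so that the later arguments may freely use the structural consequences of the earlier ones.
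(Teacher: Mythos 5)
Your argument is correct and complete: each of the three properties is derived from Lemma~\ref{local_condition_lemma} together with the one-dimensionality of the null space and the fullness of the kernel vector, and you correctly handle the subtle points (ruling out isolated vertices separately, and using the existence of an edge to ensure both parts of the bipartition are nonempty). The paper itself offers no proof here --- it cites the result from Sciriha and Gutman --- but your argument is the standard one and matches what that reference establishes.
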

\begin{lemma}[\hspace{1sp}{\cite[p.\ 135]{Cvetkovic1995}}]\label{orbit_lemma}
    Let $G$ be a nut graph and let $u \in \mathcal{N}(G)$. If the vertices $a_j, \, j \in \mathbb{Z}_t$, form an orbit of size $t$ of a given automorphism $\pi \in \mathrm{Aut}(G)$, so that $\pi(a_j) = a_{j+1}$, then either $u(a_j)$ is constant for $j \in \mathbb{Z}_t$, or $t$ is even and we have $u(a_j)$ = $(-1)^j \, u(a_0)$ for every $0 \le j \le t-1$.
\end{lemma}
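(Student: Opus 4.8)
The plan is to exploit the defining property of a nut graph: the null space $\mathcal{N}(G)$ is one-dimensional and is spanned by a full vector, and since $u \in \mathcal{N}(G)$ is the nonzero spanning vector it has no zero entries. First I would encode the automorphism $\pi$ as the permutation matrix $P$ acting on $\mathbb{R}^{V(G)}$ via $(Pu)(x) = u(\pi^{-1}(x))$, so that $P$ commutes with the adjacency matrix $A = A(G)$, i.e.\ $PA = AP$.

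The first key step is to observe that $P$ preserves the null space. Indeed, $A(Pu) = (AP)u = (PA)u = P(Au) = 0$, so $Pu \in \mathcal{N}(G)$. Because $\mathcal{N}(G)$ is one-dimensional, there must exist a scalar $\lambda$ with $Pu = \lambda u$. As $P$ is a permutation matrix, it is orthogonal and hence preserves the Euclidean norm, forcing $|\lambda| \, \|u\| = \|Pu\| = \|u\|$; since $u \neq 0$, this gives $\lambda \in \{-1, +1\}$.

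The second step is to translate the relation $Pu = \lambda u$ back to the orbit. Evaluating at $x = a_{j+1} = \pi(a_j)$ yields $(Pu)(a_{j+1}) = u(\pi^{-1}(a_{j+1})) = u(a_j)$, while $Pu = \lambda u$ gives $(Pu)(a_{j+1}) = \lambda\, u(a_{j+1})$. Comparing the two and using $\lambda = \pm 1$, I obtain $u(a_{j+1}) = \lambda\, u(a_j)$, so that $u(a_j) = \lambda^j\, u(a_0)$ for every $j$ by induction. If $\lambda = 1$, then $u$ is constant along the orbit, which is precisely the first alternative.

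Finally, if $\lambda = -1$, then $u(a_j) = (-1)^j\, u(a_0)$, which is exactly the second alternative, and it remains only to verify that $t$ must be even. Since the orbit has size $t$, indices are read modulo $t$, so $a_t = a_0$ and hence $(-1)^t\, u(a_0) = u(a_t) = u(a_0)$. The fullness of the spanning vector guarantees $u(a_0) \neq 0$, whence $(-1)^t = 1$ and $t$ is even, completing the argument. The main subtlety to handle carefully is this closing consistency check around the cycle, together with keeping the direction of the permutation action straight; the remainder is a short linear-algebraic reduction relying only on the one-dimensionality of $\mathcal{N}(G)$ and the orthogonality of $P$.
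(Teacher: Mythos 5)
Your argument is correct. The paper does not prove this lemma at all --- it is quoted from Cvetkovi\'c--Doob--Sachs \cite[p.~135]{Cvetkovic1995} --- so there is no in-paper proof to compare against; your derivation (the permutation matrix $P$ of $\pi$ commutes with $A(G)$, the one-dimensionality of $\mathcal{N}(G)$ forces $Pu = \lambda u$ with $\lambda = \pm 1$ by orthogonality of $P$, and the closing parity check $(-1)^t u(a_0) = u(a_0)$ with $u(a_0) \neq 0$ forces $2 \mid t$ in the alternating case) is the standard, complete proof of this classical fact. The only cosmetic remark is that the statement admits $u = 0$, for which the first alternative holds trivially; for $u \neq 0$ the fullness you invoke is exactly what the nut-graph hypothesis supplies.
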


As a direct consequence of Lemma \ref{orbit_lemma}, we conclude that for an arbitrary nut graph $G$ and any fixed nonzero vector from $\mathcal{N}(G)$, all the vector entries corresponding to the vertices from the same orbit must have the same absolute value. We will refer to this value as the \emph{magnitude} of the given orbit. It is straightforward to establish that the ratios of orbit magnitudes are fixed with respect to each selected nonzero $\mathcal{N}(G)$ vector.

Since we will deal with circulant matrices in the remainder of the paper, it is useful to bear in mind that the spectrum of any circulant matrix of order $n \in \mathbb{N}$
\[
    \begin{bmatrix}
        c_0 & c_1 & c_2 & \cdots & c_{n-1}\\
        c_{n-1} & c_0 & c_1 & \cdots & c_{n-2}\\
        c_{n-2} & c_{n-1} & c_0 & \cdots & c_{n-3}\\
        \vdots & \vdots & \vdots & \ddots & \vdots\\
        c_1 & c_2 & c_3 & \dots & c_0
    \end{bmatrix}
\]
can be obtained via the expression
\[
    c_0 + c_1 \zeta + c_2 \zeta^2 + \cdots + c_{n-1} \zeta^{n-1}
\]
by letting $\zeta \in \mathbb{C}$ range over all the $n$-th roots of unity. Moreover, for any such $\zeta$, the eigenvector $\begin{bmatrix} 1 & \zeta & \zeta^2 & \zeta^3 & \cdots & \zeta^{n-1} \end{bmatrix}^\intercal$ corresponds to the eigenvalue $\sum_{j=0}^{n-1} c_j \zeta^j$. For the proof of this well-known fact, please see \cite[Section~3.1]{Gray}.

We end the section by pointing out a few facts regarding the cyclotomic polynomials. For each $b \in \mathbb{N}$, we define the \emph{cyclotomic polynomial $\Phi_b(x)$} as
\[
    \Phi_b(x) = \prod_{\xi}(x - \xi) ,
\]
where $\xi \in \mathbb{C}$ ranges over the primitive $b$-th roots of unity. It is well known that for every $b \in \mathbb{N}$, the $\Phi_b(x)$ polynomial has integer coefficients and it is irreducible in $\mathbb{Q}[x]$ (see, e.g., \cite{Jameson} and the references therein). For this reason, any given $\mathbb{Q}[x]$ polynomial contains a primitive $b$-th root of unity among its roots if and only if it is divisible by $\Phi_b(x)$. We rely on this observation in Sections \ref{sc_7_circ} and~\ref{sc_11_circ}.

\section{Cubic 4- and 5-circulant nut graphs}\label{sc_maga}

The goal of the present section will be to outline an algorithm for inspecting the potential nonexistence of cubic $\ell$-circulant nut graphs for a fixed value of $\ell \ge 3$. We begin by observing that a cubic $\ell$-circulant graph of order $n$ is isomorphic to the derived graph of some cubic $\mathbb{Z}_{n / \ell}$-voltage pregraph on $\ell$ vertices. Thus, it is natural to inspect the existence of cubic $\ell$-circulant nut graphs by performing a search over the finitely many pregraphs of order $\ell$.

For a given pregraph $X$, we will say that its \emph{underlying graph} is the graph obtained from $X$ by removing its semi-edges, loops and duplicate edges. Now, observe that the quotient pregraph of any cubic $\ell$-circulant nut graph is connected, and thus has a connected underlying graph. Indeed, this is a direct consequence of Lemma \ref{basic_nut_lemma}. Thus, we can obtain all the quotient pregraphs of order $\ell$ by generating all the connected subcubic graphs on $\ell$ vertices and then adding all the combinations of missing semi-edges, loops and duplicate edges until a cubic pregraph is reached. This can be accomplished, e.g., by using the program \texttt{geng} from the package \texttt{nauty} \cite{McKayPip2014}, alongside the \texttt{SageMath} script given in \cite{GitHub}. The script relies on the fact that no vertex can have more than one semi-edge; moreover, triple edges cannot exist whenever $\ell \ge 3$. The numbers of these quotient pregraphs and underlying graphs can be found in Table \ref{sage_info}.

\begin{table}[h!]
\centering
\begin{tabular}{|l|*{7}{r|}}\hline
$\ell$ & 3 & 4 & 5 & 6 & 7 & 8 & 9 \\\hline
$U(\ell)$ & 2 & 6 & 10 & 29 & 64 & 194 & 531 \\\hline
$Q(\ell)$& 4 & 12 & 22 & 68 & 166 & 534 & 1589 \\\hline
\end{tabular}
\caption{$Q(\ell)$ signifies the number of connected cubic quotient pregraphs of order $\ell$ with at most one semi-edge around each vertex, while $U(\ell)$ denotes the number of underlying graphs of these quotient pregraphs, i.e., the number of connected subcubic graphs of order $\ell \ge 3$. Compare with \cite{VanCleemput2014_A} and \cite{VanCleemput2014_B}.}
\label{sage_info}
\end{table}

We now disclose a two-step verification method based on orbit magnitudes that can potentially prove the nonexistence of cubic $\ell$-circulant nut graphs with a particular quotient pregraph $X$ of order $\ell \ge 3$. In the first step, we inspect $\mathcal{N}(X)$ and observe the following.

\begin{proposition}\label{test_1_prop}
    Let $X$ be a connected cubic pregraph of order $\ell \ge 3$ and let $G$ be a simple graph of order $n \in \mathbb{N}$, so that $G$ is the derived graph of the $\mathbb{Z}_{n / \ell}$-voltage pregraph $(X, \varphi)$. If $\mathcal{N}(X)$ contains a nonzero nonfull vector, then $G$ is not a nut graph.
\end{proposition}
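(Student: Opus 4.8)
The plan is to lift a nonzero nonfull null vector of $X$ to a nonzero nonfull null vector of $G$; the mere existence of such a vector in $\mathcal{N}(G)$ directly violates the defining property of a nut graph, so $G$ cannot be one. Concretely, suppose $w \in \mathcal{N}(X)$ is nonzero and nonfull, and define $u \in \mathbb{R}^{V(G)}$ by $u(x, \gamma) = w(x)$ for every $x \in V(X)$ and every $\gamma \in \mathbb{Z}_{n / \ell}$. In other words, $u$ is taken to be constant along each of the $\ell$ vertex orbits of $G$, with the value on the orbit of $x$ dictated by the entry $w(x)$.

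The first step is to verify that $u \in \mathcal{N}(G)$ via Lemma~\ref{local_condition_lemma}. From the definition of the derived graph, the neighbors of a vertex $(x, \gamma)$ in $G$ are precisely the vertices $(\beg \inv a, \varphi(a) + \gamma)$ as $a$ ranges over the darts with $\beg a = x$; since $G$ is simple and cubic, these are three distinct vertices. Because $u$ ignores the second coordinate, the local condition sum at $(x, \gamma)$ equals $\sum_{a \,:\, \beg a = x} w(\beg \inv a)$, which is independent of $\gamma$. The crux is to recognize this quantity as a coordinate of $A(X) w$: grouping the darts by their terminal vertex $\beg \inv a = y$ and recalling that $A(X)_{xy}$ counts exactly these darts, one obtains
\[
    \sum_{a \,:\, \beg a = x} w(\beg \inv a) = \sum_{y \in V(X)} A(X)_{xy}\, w(y) = (A(X) w)(x) = 0 ,
\]
the last equality holding because $w \in \mathcal{N}(X)$. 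Hence the local condition is satisfied at every vertex of $G$, so $u \in \mathcal{N}(G)$.

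The remaining steps are immediate. Choosing $x_1$ with $w(x_1) \neq 0$ gives $u(x_1, \gamma) \neq 0$, so $u$ is nonzero, while choosing $x_0$ with $w(x_0) = 0$ forces $u(x_0, \gamma) = 0$, so $u$ has a zero entry and is therefore nonfull. Thus $\mathcal{N}(G)$ contains a nonzero nonfull vector, which contradicts the requirement that every nonzero element of the null space of a nut graph be full, and we conclude that $G$ is not a nut graph.

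The only genuinely delicate point is the adjacency bookkeeping in the second step, namely confirming that the neighbor-sum in the \emph{simple} derived graph $G$ really does collapse to $(A(X) w)(x)$ even when the quotient pregraph $X$ carries loops or semi-edges. I expect this to be routine once the contribution of each dart type to $A(X)$ is matched against its lift in $G$: a loop or semi-edge of $X$ incident to $x$ contributes $w(x)$ on both sides, and parallel edges are handled identically by the dart-counting definition of $A(X)$, so the identity above is unaffected by the pregraph structure.
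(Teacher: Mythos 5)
Your proof is correct and follows the same route as the paper: both lift the nonzero nonfull vector of $\mathcal{N}(X)$ to a vector on $V(G)$ that is constant on each fibre, observe it lies in $\mathcal{N}(G)$, and conclude $G$ cannot be a nut graph. The paper simply asserts membership in $\mathcal{N}(G)$ without computation, whereas you verify the local condition explicitly via the dart-counting definition of $A(X)$ (including the loop/semi-edge bookkeeping), which is a harmless elaboration of the same argument.
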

\begin{proof}
    Let $u \in \mathcal{N}(X)$ be a nonzero nonfull vector. Now, let $w \in \mathbb{R}^{V(G)}$ be the vector such that $w(x)$ equals the $u$-entry corresponding to the quotient of $x$, for each $x \in V(G)$. In this case, we have that $w$ is a nonzero nonfull vector that belongs to $\mathcal{N}(G)$, hence the graph $G$ cannot be a nut graph.
\end{proof}

By virtue of Proposition \ref{test_1_prop}, if $\dim \mathcal{N}(X) \ge 2$, then $\mathcal{N}(X)$ necessarily contains a nonzero nonfull vector, which means that $X$ cannot give rise to a nut graph. On the other hand, if $\dim \mathcal{N}(X) = 1$, then we compute a basis vector and verify if it has a zero entry. If the said vector is full, then we cannot use Proposition \ref{test_1_prop} to disprove the existence of cubic $\ell$-circulant nut graphs with the given quotient pregraph. Finally, if $\dim \mathcal{N}(X) = 0$, then we again cannot conclude anything via Proposition \ref{test_1_prop}, and thus proceed to the second step. This leads us to the next proposition.

\begin{proposition}\label{test_2_prop}
    Let $X$ be a connected cubic pregraph of order $\ell \ge 3$ for which there is a nut graph of order $n \in \mathbb{N}$ that is the derived graph of a $\mathbb{Z}_{n / \ell}$-voltage pregraph $(X, \varphi)$. Then there exists a matrix $B \in \mathbb{R}^{V(X) \times V(X)}$ such that $|B| = A(X)$ and $\mathcal{N}(B)$ contains a positive vector.
\end{proposition}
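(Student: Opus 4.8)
The plan is to construct $B$ explicitly from the null vector of the nut graph $G$, exploiting the fact that the nut property pins down both the sign and the magnitude pattern of that vector on each vertex orbit. First I would fix a full vector $u$ spanning the one-dimensional space $\mathcal{N}(G)$. The orbits of the cyclic automorphism group are exactly the fibres $\{(v,\gamma) : \gamma \in \Z_{n/\ell}\}$ indexed by the vertices $v \in V(X)$, so Lemma~\ref{orbit_lemma} applies to each of them: writing $m_v = |u(v,0)| > 0$ for the magnitude of the orbit and $\eta_v \in \{-1,+1\}$ for the sign of $u(v,0)$, there is a pattern $\chi_v(\gamma) \in \{1,(-1)^\gamma\}$ with $u(v,\gamma) = \eta_v\, m_v\, \chi_v(\gamma)$. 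I would then set
\[
    B_{vy} = \eta_y \sum_{\substack{a \,:\, \beg a = v \\ \beg(\inv a) = y}} \chi_y(\varphi(a)),
\]
where each summand equals $\pm 1$ and the number of summands is exactly $A(X)_{vy}$.

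Granting this definition, the verification that the positive vector $m = (m_v)_{v \in V(X)}$ satisfies $Bm = 0$ is routine: expanding $(Bm)_v$ and regrouping the three darts at $v$ according to their terminal vertices reproduces verbatim the local condition of Lemma~\ref{local_condition_lemma} at the vertex $(v,0)$ of $G$, which vanishes because $u \in \mathcal{N}(G)$. Hence $m$ is a positive vector in $\mathcal{N}(B)$, which is exactly what the proposition demands.

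The hard part will be proving that $|B| = A(X)$, that is, that no cancellation occurs: for every pair $v,y$ the $A(X)_{vy}$ signs $\chi_y(\varphi(a))$ must all coincide, so that $|B_{vy}| = A(X)_{vy}$ rather than something strictly smaller. I would argue by contradiction, leaning on the cubic hypothesis. A cancellation at $(v,y)$ would require at least two parallel darts from $v$ to $y$ whose signs $\chi_y(\varphi(a))$ disagree. If $v \neq y$ this can only be a double edge, since triple edges are excluded for $\ell \ge 3$; the two opposite signs then cancel in the local condition at $(v,0)$, leaving the single contribution of the third dart at $v$, a nonzero term $\eta_{y'} m_{y'} \chi_{y'}(\varphi(a_3))$ that is forced to vanish and so drives the magnitude $m_{y'}$ to zero, contradicting $m_{y'} > 0$. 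If $v = y$ the clashing darts come from loops and semi-edges; the two darts of any single loop carry opposite voltages $g,-g$ and hence equal signs, and a vertex carries at most one semi-edge, so the only configuration producing a disagreement on the diagonal is one loop together with one semi-edge. But that occupies all three darts at $v$, which collapses the local condition at $(v,0)$ to $m_v$ times a nonzero odd integer and again forces $m_v = 0$.

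In every case the cubic structure, reinforced by the ban on triple edges and on multiple semi-edges at a vertex, keeps the enumeration of dart configurations finite and turns every would-be cancellation into a zero entry of $u$, which the nut property forbids. With the no-cancellation claim secured we obtain $|B| = A(X)$, and together with $Bm = 0$ this completes the proof. The only real subtlety I anticipate is the bookkeeping in this case analysis — in particular, keeping track of the semi-edge voltages (which must equal $n/(2\ell)$, since a voltage of $0$ would produce a loop in the simple graph $G$) and of the fact that a loop always occupies two of the three darts at its vertex.
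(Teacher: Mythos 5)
Your proposal is correct and follows essentially the same route as the paper: you build $B$ from the signs of a full null vector of the derived graph, obtain $Bm=0$ from the local conditions at the vertices $(v,0)$, and rule out cancellation because any cancelling pair of darts would force some magnitude (hence some entry of the null vector) to vanish, contradicting the nut property. Your explicit case analysis over double edges, loops and semi-edges is just a more granular version of the paper's observation that any two neighbours of a vertex lying in the same orbit must carry equal null-vector entries.
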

\begin{proof}
    Let $G$ be the nut graph of order $n \in \mathbb{N}$ that is the derived graph of $(X, \varphi)$. Moreover, let $u \in \mathbb{R}^{V(G)}$ be a nonzero null space vector of $G$ and let $w \in \mathbb{R}^{V(X)}$ be the positive vector containing the orbit magnitudes with respect to $u$.
    Furthermore, let $\sigma \colon V(G) \to V(X)$ be the function that maps each $V(G)$ vertex to its quotient, so that $w(\sigma(y)) = |u(y)|$ for any $y \in V(G)$.
    
    Let $x \in V(X)$ be a fixed vertex and let $y \in V(G)$ be any vertex such that $\sigma(y) = x$. The local condition in graph $G$ for the vertex $y$ is of the form
    \[
        u(y_1) + u(y_2) + u(y_3) = 0,
    \]
    where $y_1$, $y_2$ and $y_3$ are the neighbors of $y$. This directly leads us to
    \[
        \tau_1 \, w(\sigma(y_1)) + \tau_2 \, w(\sigma(y_2)) + \tau_3 \, w(\sigma(y_3)) = 0,
    \]
    for some choice of $\tau_1, \tau_2, \tau_3 \in \{ -1, +1\}$. It is obvious that the vertices $y_1$, $y_2$ and $y_3$ cannot all reside in the same orbit. Now, if precisely two of them, say $y_1$ and $y_2$, do have the same quotient, then we must have $u(y_1) = u(y_2)$, since otherwise, we would obtain $u(y_3) = 0$, which contradicts the fact that $G$ is a nut graph. Therefore, the vector $w$ is necessarily orthogonal to some row vector $t^{(x)} \in \mathbb{R}^{V(X)}$ such that $|t^{(x)}|$ matches the $x$-row of $A(X)$. By stacking up all such $t^{(x)}$ vectors with respect to $x \in V(X)$, we obtain the desired matrix $B$.
\end{proof}

Note that the matrix $B$ is not necessarily symmetric, hence whenever we refer to its null space, we always consider the right null space. Due to Proposition~\ref{test_2_prop}, we can prove that the given pregraph $X$ cannot give rise to a nut graph if we go through all of the finitely many matrices $B \in \mathbb{R}^{V(X) \times V(X)}$ such that $|B| = A(X)$ and show that none of them contain a positive null space vector. Here, we may assume without loss of generality that the first nonzero entry of each row of $B$ is positive, since changing a row of a matrix to its (additive) inverse does not affect the null space. Finding a positive null space vector gets down to solving a corresponding ILP (integer linear programming)  problem, as shown in \cite{GitHub}. We now give a concrete example to clarify Propositions~\ref{test_1_prop} and~\ref{test_2_prop}.

\begin{example}
Let $X$ be the connected cubic pregraph given in Figure \ref{test_ex_1}. Clearly, we have
\[
    A(X) = \begin{bmatrix}
        0 & 2 & 1\\
        2 & 0 & 1\\
        1 & 1 & 1
    \end{bmatrix} .
\]
It is straightforward to compute that $\mathcal{N}(X)$ is one-dimensional and that it is spanned by the vector $\begin{bmatrix} 1 & 1 & - 2\end{bmatrix}^\intercal$. Thus, Proposition \ref{test_1_prop} fails to guarantee that no graph of order $n \in \mathbb{N}$ derived from a $\mathbb{Z}_{n / 3}$-voltage pregraph $(X, \varphi)$ can be a nut graph.

We proceed to Proposition \ref{test_2_prop} and find the matrix
\[
    B = \begin{bmatrix}
        0 & 2 & -1\\
        2 & 0 & -1\\
        1 & 1 & -1
    \end{bmatrix}
\]
such that $|B| = A(X)$, while $\mathcal{N}(B)$ contains the positive vector $\begin{bmatrix} 1 & 1 & 2 \end{bmatrix}^\intercal$. With this in mind, Proposition \ref{test_2_prop} also fails to yield that no graph $G$ derived from $X$ can be a nut graph. Indeed, this is because there does exist a concrete nut graph $G_0$ derived from $(X, \varphi_0)$ for an according voltage assignment $\varphi_0$, as shown in Figure~\ref{test_ex_2}.

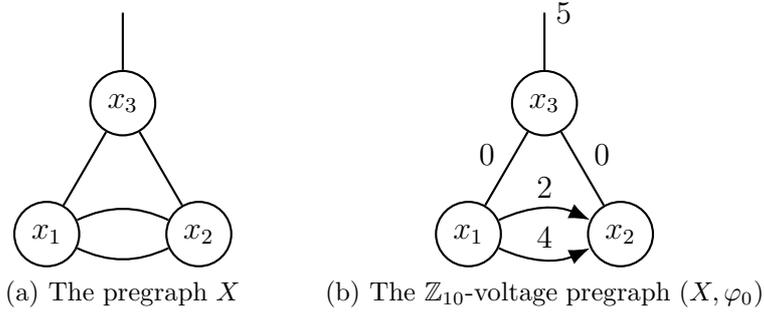
\begin{figure}
    \centering
    \subfloat[The pregraph $X$] {
        \begin{tikzpicture}[scale=0.8]
            \node[state, minimum size=0.75cm, thick] (1) at (1.25, -2.17) {$x_1$};
            \node[state, minimum size=0.75cm, thick] (2) at (2.5, 0) {$x_3$};
            \node[state, minimum size=0.75cm, thick] (3) at (3.75, -2.17) {$x_2$};
    
            \path[thick] (1) edge (2);
            \path[thick] (2) edge (3);
            \draw[thick] (1) edge[bend left=25] (3);
            \draw[thick] (1) edge[bend right=25] (3);
            \draw[thick] (2) to (2.5, 1.5);
        \end{tikzpicture}
        \label{test_ex_1}
    }
    \hspace{0.6cm}
    \subfloat[The $\mathbb{Z}_{10}$-voltage pregraph $(X, \varphi_0)$] { \qquad\qquad
        \begin{tikzpicture}[scale=0.8]
            \node[state, minimum size=0.75cm, thick] (1) at (1.25, -2.17) {$x_1$};
            \node[state, minimum size=0.75cm, thick] (2) at (2.5, 0) {$x_3$};
            \node[state, minimum size=0.75cm, thick] (3) at (3.75, -2.17) {$x_2$};
    
            \path[thick] (1) edge node[pos=0.4, above, xshift=-0.2cm] {$0$} (2);
            \path[thick] (2) edge node[pos=0.6, above, xshift=0.2cm] {$0$} (3);
            \draw[edge, thick] (1) to[bend left=25] node[pos=0.5, above] {$2$} (3);
            \draw[edge, thick] (1) to[bend right=25] node[pos=0.5, above] {$4$} (3);
            \draw[thick] (2) to node[pos=1, right] {$5$} (2.5, 1.5);
        \end{tikzpicture}  \qquad\qquad
        \label{test_ex_2}
    }
    \caption{A $\mathbb{Z}_{10}$-voltage pregraph $(X, \varphi_0)$ that gives rise to the cubic $3$-circulant nut graph $G_0$.}
\end{figure}
\end{example}

By running the \texttt{SageMath} script given in \cite{GitHub}, which relies on Propositions \ref{test_1_prop} and~\ref{test_2_prop}, we now obtain the following result.

\begin{theorem}\label{nonexist}
    There exists no cubic $4$-circulant nut graph and no cubic $5$-circulant nut graph.
\end{theorem}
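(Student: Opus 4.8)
The plan is to reduce the infinite existence question to a finite computation by exploiting the correspondence between cubic $\ell$-circulant graphs and cubic $\mathbb{Z}_{n/\ell}$-voltage pregraphs on $\ell$ vertices, and then to apply the two screening tests established in Propositions \ref{test_1_prop} and \ref{test_2_prop}. First I would observe that, by Lemma \ref{basic_nut_lemma}, any cubic $\ell$-circulant nut graph must have a connected quotient pregraph; hence it suffices to examine the finitely many connected cubic quotient pregraphs of order $\ell$ listed in Table \ref{sage_info}, namely the $Q(4)=12$ pregraphs for $\ell=4$ and the $Q(5)=22$ pregraphs for $\ell=5$. For each such pregraph $X$, the goal is to rule out that \emph{any} voltage assignment $\varphi$ on $\mathbb{Z}_{n/\ell}$ produces a nut graph, for \emph{any} order $n$.

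Next, for each candidate pregraph $X$ I would run the first test: compute $\mathcal{N}(X)$ from the $\ell\times\ell$ adjacency matrix $A(X)$. By Proposition \ref{test_1_prop}, if $\mathcal{N}(X)$ contains a nonzero nonfull vector, then no derived graph of $X$ can be a nut graph; this immediately disposes of every $X$ with $\dim\mathcal{N}(X)\ge 2$ and of those with a one-dimensional null space whose basis vector has a zero entry. For the remaining pregraphs, I would invoke the second test from Proposition \ref{test_2_prop}, which says that a necessary condition for $X$ to give rise to a nut graph is the existence of a sign matrix $B$ with $|B|=A(X)$ and a positive vector in $\mathcal{N}(B)$. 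Since each row of $B$ is obtained by choosing signs $\tau_i\in\{-1,+1\}$ on the nonzero entries of the corresponding row of $A(X)$, and since we may normalize so that the first nonzero entry of each row is positive, there are only finitely many such $B$ to inspect. For each one, deciding whether a positive null vector exists is a small feasibility problem that can be encoded as the ILP described in \cite{GitHub}.

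The execution is therefore a computer-assisted exhaustive verification: for every one of the $12+22$ quotient pregraphs, either the first test already certifies nonexistence, or the second test is applied to all admissible sign patterns $B$, and in every case no $B$ admits a positive null space vector. Running the \texttt{SageMath} script of \cite{GitHub} over all these pregraphs and sign matrices, and confirming that the combined tests eliminate each candidate, yields the claimed conclusion that no cubic $4$-circulant nut graph and no cubic $5$-circulant nut graph exists.

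The main obstacle I anticipate is not conceptual but one of \emph{completeness and trust in the enumeration}. The argument only works if the list of connected cubic quotient pregraphs is genuinely exhaustive, so the delicate points are the structural reductions that keep the search finite: that no vertex carries more than one semi-edge, that triple edges are impossible for $\ell\ge 3$, and that these constraints are correctly imposed when generating pregraphs from the subcubic underlying graphs produced by \texttt{geng}. A secondary subtlety is ensuring the second test is applied with the correct logic, since Proposition \ref{test_2_prop} gives only a \emph{necessary} condition; failure to find a positive null vector for any valid $B$ conclusively rules out $X$, whereas finding one would leave the case open and demand further analysis. The verification succeeds precisely because, for $\ell\in\{4,5\}$, every candidate is eliminated by one of the two tests, so no residual cases survive.
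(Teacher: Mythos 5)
Your proposal matches the paper's argument exactly: the paper proves Theorem~\ref{nonexist} by running the \texttt{SageMath} script over the $Q(4)=12$ and $Q(5)=22$ connected cubic quotient pregraphs and eliminating each via Propositions~\ref{test_1_prop} and~\ref{test_2_prop}. Your account of the enumeration constraints and of the one-directional logic of the second test is also consistent with how the paper sets up and justifies the computation.
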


\section{Cubic 7-circulant nut graphs}\label{sc_7_circ}

Besides proving Theorem \ref{nonexist}, the \texttt{SageMath} script given in \cite{GitHub} is also capable of finding cubic pregraphs of a given order $\ell \ge 3$ that may potentially give rise to nut graphs. Using these computational results, we disclose an infinite family of cubic $7$-circulant nut graphs. To begin, for each $n, \alpha, \beta \in \mathbb{N}$ such that $n \ge 4, \, 2 \mid n$ and $1 \le \alpha, \beta < \frac{n}{2}$, let $G^{(7)}(n; \alpha, \beta)$ denote the $\mathbb{Z}_n$-voltage pregraph given in Figure~\ref{inf_family_1}. We observe that such a voltage graph gives rise to a simple cubic $7$-circulant graph. For convenience, we will denote the vertices of this derived graph via $a_j, b_j, \ldots, g_j, \, j \in \mathbb{Z}_n$ instead of $(a, j), (b, j), \ldots, (g, j), \, j \in \mathbb{Z}_n$. We will now demonstrate that, under certain conditions, $G^{(7)}(n; \alpha, \beta)$ gives rise to a nut graph. Our first step is to prove the following lemma.

\begin{figure}[htb]
    \centering
    \begin{tikzpicture}[scale=0.9]
        \node[state, minimum size=0.90cm, thick] (1) at (0, 0) {$a$};
        \node[state, minimum size=0.90cm, thick] (2) at (2, 0) {$b$};
        \node[state, minimum size=0.90cm, thick] (3) at (4, 0) {$c$};
        \node[state, minimum size=0.90cm, thick] (4) at (5.73, 1) {$d$};
        \node[state, minimum size=0.90cm, thick] (5) at (5.73, -1) {$e$};
        \node[state, minimum size=0.90cm, thick] (6) at (7.46, 0) {$f$};
        \node[state, minimum size=0.90cm, thick] (7) at (9.46, 0) {$g$};

        \draw[thick] (1) to node[pos=0.5, below] {$0$} (2);
        \draw[thick] (2) to node[pos=0.5, below] {$0$} (3);
        \draw[thick] (3) to node[pos=0.5, above, xshift=-0.1cm] {$0$} (4);
        \draw[thick] (3) to node[pos=0.5, below, xshift=-0.1cm] {$0$} (5);
        \draw[thick] (4) to node[pos=0.5, above, xshift=0.1cm] {$0$} (6);
        \draw[thick] (5) to node[pos=0.5, below, xshift=0.1cm] {$0$} (6);
        \draw[thick] (6) to node[pos=0.5, below] {$0$} (7);

        \draw[thick] (2) to node[pos=1, right] {$\frac{n}{2}$} (2, 1.2);
        \draw[thick] (4) to node[pos=1, right] {$\frac{n}{2}$} (5.73, 2.2);
        \draw[thick] (5) to node[pos=1, right] {$\frac{n}{2}$} (5.73, -2.2);

        \draw[edge, thick] (1) to[out=135, in=-135, looseness=6] node[pos=0.1, above] {$\alpha$} (1);
        \draw[edge, thick] (7) to[out=45, in=-45, looseness=6] node[pos=0.1, above] {$\beta$} (7);
    \end{tikzpicture}
    \caption{The $\mathbb{Z}_n$-voltage pregraph $G^{(7)}(n; \alpha, \beta)$ that gives rise to a nut graph whenever $n \ge 4, \, 4 \mid n$ and $\alpha=\beta=1$, or $n \ge 6, \, n \equiv_4 2$ and $\alpha=\beta=2$.}
    \label{inf_family_1}
\end{figure}
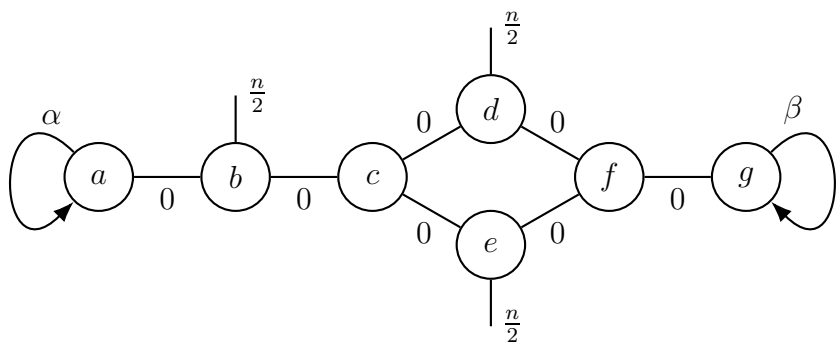

\begin{lemma}\label{circ_7_lemma}
    For any $n \ge 4, \, 2 \mid n$ and $1 \le \alpha, \beta < \frac{n}{2}$, the $\mathbb{Z}_n$-voltage pregraph $G^{(7)}(n; \alpha, \beta)$ gives rise to a nut graph if and only if the $\mathbb{Z}[x]$ polynomial
    \begin{equation}\label{fam_poly1}
        3x^{2\alpha + \beta + \frac{n}{2}} + 3x^{\beta + \frac{n}{2}} + 2x^{2\alpha + 2\beta} + 2x^{2\alpha} - 2x^{\alpha + \beta} + 2x^{2\beta} + 2
    \end{equation}
    contains only $-1$ as a root among all the $n$-th roots of unity.
\end{lemma}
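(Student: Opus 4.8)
The plan is to exploit the $\mathbb{Z}_n$-symmetry of the derived graph $G$ in order to block-diagonalize $A(G)$ and thereby convert the nut condition into a root-counting problem for \eqref{fam_poly1}. Writing the seven vertex orbits as $a_j,\dots,g_j$ with $j\in\mathbb{Z}_n$, I would first record the adjacency pattern read off from Figure~\ref{inf_family_1}: each $a_j$ is joined to $b_j$ and to $a_{j\pm\alpha}$ (the loop of voltage $\alpha$); each $g_j$ to $f_j$ and $g_{j\pm\beta}$; the semi-edges of voltage $\frac{n}{2}$ join $b_j\sim b_{j+n/2}$, $d_j\sim d_{j+n/2}$ and $e_j\sim e_{j+n/2}$; while the $0$-voltage edges act within each fiber. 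Since $A(G)$ commutes with the cyclic shift, I would look for null vectors of the form $u(v_j)=w_v\,\zeta^{\,j}$ with $\zeta$ ranging over the $n$-th roots of unity, exactly as for the circulant spectrum recalled in Section~\ref{sc_prel}. By Lemma~\ref{local_condition_lemma} the local conditions then collapse, for each fixed $\zeta$, to a $7\times 7$ homogeneous system $M(\zeta)\,w=0$ in the unknowns $w_a,\dots,w_g$, whose coefficients depend on $\zeta$ only through $s:=\zeta^{n/2}\in\{\pm1\}$, $p:=\zeta^{\alpha}+\zeta^{-\alpha}$ and $q:=\zeta^{\beta}+\zeta^{-\beta}$.

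Next I would solve this system by elimination. The equations at $d$ and $e$ differ only in $s\,w_d$ versus $s\,w_e$, forcing $w_d=w_e$; the equations at $c$ and $f$ then give $w_d=-\tfrac12 w_b$ and $w_g=w_b$, while the $a$- and $g$-equations give $w_b=-p\,w_a$ and $w_f=-q\,w_g$. Substituting everything into the $d$-equation yields $w_a=-(q+\tfrac{3}{2}s)\,w_b$, which is compatible with $w_b=-p\,w_a$ only if either $w_b=0$ (the trivial solution) or $3sp+2pq=2$ (which in particular forces $p\neq0$). Hence $\dim\ker M(\zeta)\le1$, with equality precisely when $3sp+2pq=2$; multiplying this scalar identity through by $\zeta^{\alpha+\beta}$ and expanding $s,p,q$ reproduces exactly the vanishing of \eqref{fam_poly1} at $x=\zeta$. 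Because $\mathbb{C}^{V(G)}$ decomposes as the direct sum of the $\zeta$-isotypic subspaces, the nullity of $A(G)$ equals the number of $n$-th roots of unity $\zeta$ at which \eqref{fam_poly1} vanishes.

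With the counting in place, I would pin down which root can occur for a nut graph. Since \eqref{fam_poly1} has integer coefficients, its non-real roots occur in complex-conjugate pairs, so a nut graph---having nullity exactly $1$---can only arise from a single \emph{real} root, i.e.\ $\zeta\in\{1,-1\}$ (recall $2\mid n$). A direct substitution $\zeta=1$ gives $s=1$, $p=q=2$ and $3sp+2pq=14\neq2$, so $\zeta=1$ is never a root; this forces the unique contributing root to be $\zeta=-1$, which is the conceptual crux of the argument. Conversely, when $\zeta=-1$ is the only $n$-th root of unity annihilating \eqref{fam_poly1}, the nullity equals $1$ and the spanning null vector is $u(v_j)=w_v(-1)^j$, consistent with the alternating alternative of Lemma~\ref{orbit_lemma}.

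Finally I would verify fullness of this vector, which is what separates a nut graph from a merely singular one. At $\zeta=-1$ one has $p=2(-1)^\alpha\neq0$ and $q=2(-1)^\beta\neq0$, so $w_a$, $w_b=w_g$, $w_d=w_e$ and $w_f$ are all nonzero multiples of the free parameter $w_b$; the only remaining entry $w_c=w_b\bigl(\tfrac12(-1)^\alpha-(-1)^{n/2}\bigr)$ is nonzero because $\pm\tfrac12$ can never equal $\pm1$. Thus the null vector has no zero entry, and---$G$ being a simple cubic graph by construction---nullity $1$ together with a full spanning vector is precisely the nut condition, with connectedness, nonbipartiteness and leaflessness following automatically from Lemma~\ref{basic_nut_lemma}. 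Assembling the two directions then yields the stated equivalence. I expect the routine but error-prone part to be the exact bookkeeping of the $7\times7$ elimination and the $\zeta^{\alpha+\beta}$ normalization, whereas the genuinely decisive steps are the conjugate-pairing argument together with the exclusion of $\zeta=1$, which isolate $-1$, and the nonvanishing check for $w_c$.
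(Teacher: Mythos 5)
Your proposal is correct and follows essentially the same route as the paper: both exploit the cyclic symmetry to reduce the nullity computation to counting which $n$-th roots of unity annihilate the symbol $3sp+2pq-2$ (the paper eliminates down to a single circulant recurrence on the $d$-orbit and then reads off its spectrum, whereas you Fourier-decompose first and eliminate within each $7\times 7$ block), and both isolate $\zeta=-1$ via conjugate pairing plus the exclusion of $\zeta=1$, finishing with the same fullness check of the alternating null vector. The only blemish is your formula for $w_c$, which should read $w_c=\bigl(2(-1)^{\beta}+\tfrac12(-1)^{n/2}\bigr)w_b$ rather than $w_b\bigl(\tfrac12(-1)^{\alpha}-(-1)^{n/2}\bigr)$; it is still nonzero for exactly the reason you give (the two terms have different absolute values), so nothing breaks.
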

\begin{proof}
    Let $G$ be the graph derived from $G^{(7)}(n; \alpha, \beta)$. From Lemma \ref{local_condition_lemma}, it follows that any vector $u \in \mathbb{R}^{V(G)}$ belongs to $\mathcal{N}(G)$ if and only if it represents a solution to the following system of equations:
    
    \begin{alignat}{2}
        \label{fam_aux1} u\left( a_{j + \alpha} \right) + u\left( a_{j - \alpha }\right) + u\left( b_{j}\right) &= 0 \qquad && (j \in \mathbb{Z}_n) ,\\
        \label{fam_aux2} u\left( a_{j}\right) + u\left( b_{j + \frac{n}{2} }\right) + u\left( c_{j} \right) &= 0 \qquad && (j \in \mathbb{Z}_n) ,\\
        \label{fam_aux3} u\left( b_{j}\right) + u\left( d_{j}\right) + u\left( e_{j} \right) &= 0 \qquad && (j \in \mathbb{Z}_n) ,\\
        \label{fam_aux4} u\left( c_{j}\right) + u\left( d_{j + \frac{n}{2} }\right) + u\left( f_{j} \right) &= 0 \qquad && (j \in \mathbb{Z}_n) ,\\
        \label{fam_aux5} u\left( c_{j}\right) + u\left( e_{j + \frac{n}{2} }\right) + u\left( f_{j} \right) &= 0 \qquad && (j \in \mathbb{Z}_n) ,\\
        \label{fam_aux6} u\left( d_{j}\right) + u\left( e_{j}\right) + u\left( g_{j} \right) &= 0 \qquad && (j \in \mathbb{Z}_n) ,\\
        \label{fam_aux7} u\left( f_{j}\right) + u\left( g_{j + \beta}\right) + u\left( g_{j - \beta} \right) &= 0 \qquad && (j \in \mathbb{Z}_n) .
    \end{alignat}
    
    Equations \eqref{fam_aux3} and \eqref{fam_aux6} together yield $u\left( g_{j} \right) = u\left( b_{j} \right)$, while combining Equations~\eqref{fam_aux4} and \eqref{fam_aux5} gives us
    \begin{alignat}{2}
        \label{fam_aux8} u\left( e_j\right) &= u\left( d_{j}\right) \qquad && (j \in \mathbb{Z}_n) .
    \end{alignat}
    Furthermore, Equation \eqref{fam_aux6} allows us to conclude that
    \begin{alignat}{2}
        \label{fam_aux9} u\left( g_j\right) &= - 2u\left( d_{j} \right) \qquad && (j \in \mathbb{Z}_n) ,
    \end{alignat}
    which then also implies
    \begin{alignat}{2}
        \label{fam_aux10} u\left( b_j\right) &= - 2u\left( d_{j} \right) \qquad && (j \in \mathbb{Z}_n) .
    \end{alignat}
    By plugging in Equation~\eqref{fam_aux9} into Equation~\eqref{fam_aux7}, we obtain
    \begin{alignat}{2}
        \label{fam_aux11} u\left( f_j\right) &= 2u\left( d_{j + \beta}\right) + 2u\left( d_{j - \beta}\right) \qquad && (j \in \mathbb{Z}_n) .
    \end{alignat}
    Now, by plugging in Equation~\eqref{fam_aux11} into Equation~\eqref{fam_aux4}, we see that
    \begin{alignat}{2}
        \label{fam_aux12} u\left( c_j\right) &= -u\left( d_{j + \frac{n}{2}} \right) - 2u\left( d_{j + \beta}\right) - 2u\left( d_{j - \beta}\right) \qquad && (j \in \mathbb{Z}_n).
    \end{alignat}
    By combining Equations \eqref{fam_aux10} and \eqref{fam_aux12} together with Equation~\eqref{fam_aux2}, we are also able to reach
    \begin{alignat}{2}
        \label{fam_aux13} u\left( a_j\right) &= 3u\left( d_{j + \frac{n}{2}} \right) + 2u\left( d_{j + \beta}\right) + 2u\left( d_{j - \beta}\right) \qquad && (j \in \mathbb{Z}_n) .
    \end{alignat}
    Finally, by plugging in Equations \eqref{fam_aux10} and \eqref{fam_aux13} into Equation \eqref{fam_aux1}, we get
    \begin{align}\label{fam_aux14}
        \begin{split}
            3u\left( d_{j + \alpha + \frac{n}{2}} \right) &+ 2u\left( d_{j + \alpha + \beta}\right) + 2u\left( d_{j + \alpha - \beta}\right) + 3u\left( d_{j - \alpha + \frac{n}{2}} \right)\\
            &+ 2u\left( d_{j - \alpha + \beta}\right) + 2u\left( d_{j - \alpha - \beta}\right) - 2u\left( d_j \right) = 0 \qquad (j \in \mathbb{Z}_n) .
        \end{split}
    \end{align}
    It is straightforward to verify that the converse is also true, i.e., that the system of Equations \eqref{fam_aux1}--\eqref{fam_aux7} is actually equivalent to the system of Equations \eqref{fam_aux8}--\eqref{fam_aux14}.

    If $D$ consists of all the vertices from the $d$-orbit of $G$, then for any vector $u \in \mathbb{R}^D$ satisfying Equation~\eqref{fam_aux14}, there exists a unique extension to an $\mathbb{R}^{V(G)}$ vector which belongs to $\mathcal{N}(G)$. In fact, $\mathcal{N}(G)$ is composed of precisely all the vectors of the form $Wu$, where $u \in \mathbb{R}^D$ is a solution to Equation~\eqref{fam_aux14}, and $W \in \mathbb{R}^{V(G) \times D}$ is a matrix such that:
    \begin{enumerate}[label=\textbf{(\roman*)}]
        \item its $D \times D$ submatrix is the identity matrix;
        \item the elements of its $\left( V(G) \setminus D \right) \times D$ submatrix are determined by the Equations \eqref{fam_aux8}--\eqref{fam_aux13}.
    \end{enumerate}
    Since $W$ has full column rank, it can be observed that $\dim \mathcal{N}(G)$ coincides with the dimension of the solution set of Equation~\eqref{fam_aux14}. This solution set can be regarded as the null space of an according circulant matrix $C \in \mathbb{R}^{\mathbb{Z}_n \times \mathbb{Z}_n}$ whose spectrum is given by
    \begin{equation}\label{fam_aux15}
        3 \zeta^{\alpha + \frac{n}{2}} + 2 \zeta^{\alpha + \beta} + 2 \zeta^{\alpha - \beta} + 3\zeta^{-\alpha + \frac{n}{2}} + 2\zeta^{-\alpha+\beta} + 2\zeta^{-\alpha-\beta} - 2 ,
    \end{equation}
    as $\zeta \in \mathbb{C}$ ranges over the $n$-th roots of unity.

    Due to $\dim \mathcal{N}(G) = \dim \mathcal{N}(C)$, we conclude that if $G$ is a nut graph, then Expression \eqref{fam_aux15} must yield zero for precisely one $n$-th root of unity $\zeta$. It is obvious that $\zeta = 1$ cannot be a root of Expression \eqref{fam_aux15}, and if Expression \eqref{fam_aux15} had a nonreal $n$-th root of unity $\zeta_0$ as a root, then it would also contain $\overline{\zeta_0} \neq \zeta_0$ as a root, which is impossible. Therefore, in this scenario we get that Expression \eqref{fam_aux15} contains only $-1$ as a root among all the $n$-th roots of unity. By multiplying the entire expression with $\zeta^{\alpha + \beta}$, we notice that Polynomial \eqref{fam_poly1} contains only $-1$ as a root among all the $n$-th roots of unity.
    
    Now, suppose that Polynomial \eqref{fam_poly1} does contain only $-1$ as a root among all the $n$-th roots of unity. In this case, we get $\dim \mathcal{N}(G) = \dim \mathcal{N}(C) = 1$. Moreover, $\mathcal{N}(C)$ is spanned by the vector $\begin{bmatrix} 1 & -1 & 1 & -1 & \cdots & 1 & -1 \end{bmatrix}^\intercal$, hence Equation~\eqref{fam_aux14} contains a solution $w_0 \in \mathbb{R}^D$ such that
    \[
        w_0 \left( d_j \right) = (-1)^j \qquad (0 \le j < n) .
    \]
    It then follows that $u_0 = W w_0 \in \mathcal{N}(G)$. Equations \eqref{fam_aux8}--\eqref{fam_aux13} make it easy to check that $u_0$ is a full vector, since $\left| w_0 \left( d_j \right) \right|$ is constant for $j \in \mathbb{Z}_n$, while
    \[
        u_0 \left( f_j\right) = 2w_0\left( d_{j + \beta}\right) + 2w_0 \left( d_{j - \beta}\right) = 4w_0\left( d_{j + \beta}\right) \neq 0
    \]
    for any $j \in \mathbb{Z}_n$. Therefore, graph $G$ is a nut graph.
\end{proof}

We now disclose the main proposition of this section that demonstrates how for any $n \ge 4, \, 2 \mid n$, the parameters $\alpha, \beta$ can be configured so that the voltage pregraph $G^{(7)}(n; \alpha, \beta)$ gives rise to a nut graph.

\begin{proposition}\label{circ_7_prop}
    The graph derived from $G^{(7)}(n; \alpha, \beta)$ is a nut graph whenever one of the following two conditions is satisfied:
    \begin{enumerate}[label=\textbf{(\roman*)}]
        \item $n \ge 4, \, 4 \mid n$ and $\alpha = \beta = 1$;
        \item $n \ge 6, \, n \equiv_4 2$ and $\alpha = \beta = 2$.
    \end{enumerate}
\end{proposition}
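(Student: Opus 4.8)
The plan is to invoke Lemma~\ref{circ_7_lemma} and reduce each case to a short computation over the $n$-th roots of unity. By that lemma, the derived graph is a nut graph precisely when Expression~\eqref{fam_aux15} vanishes at $\zeta=-1$ and at no other $n$-th root of unity; recall that Polynomial~\eqref{fam_poly1} arises from Expression~\eqref{fam_aux15} upon multiplying by the nonzero factor $\zeta^{\alpha+\beta}$, so the two share exactly the same roots among the $n$-th roots of unity. The structural observation driving the argument is that every $n$-th root of unity satisfies $\zeta^{n/2}\in\{-1,+1\}$; writing $\varepsilon=\zeta^{n/2}$ and $\zeta=e^{i\theta}$, the palindromic Expression~\eqref{fam_aux15} collapses to a real quadratic in a single cosine.

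First I would treat case~(i), where $\alpha=\beta=1$. Substituting these values, Expression~\eqref{fam_aux15} becomes $3\varepsilon(\zeta+\zeta^{-1})+2(\zeta^2+\zeta^{-2})+2$, which using $\zeta+\zeta^{-1}=2\cos\theta$ and $\zeta^2+\zeta^{-2}=2(2\cos^2\theta-1)$ equals $8\cos^2\theta+6\varepsilon\cos\theta-2$. Setting $c=\cos\theta$, the vanishing condition reads $8c^2+6\varepsilon c-2=0$, with roots $c\in\{\tfrac14,-1\}$ when $\varepsilon=1$ and $c\in\{1,-\tfrac14\}$ when $\varepsilon=-1$. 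For case~(ii) with $\alpha=\beta=2$, the identical manipulation---now with $\zeta^2+\zeta^{-2}=2\cos 2\theta$ and $\zeta^4+\zeta^{-4}=2\cos 4\theta$---produces the very same quadratic $8d^2+6\varepsilon d-2=0$ in the variable $d=\cos 2\theta$, with the same two solution sets.

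It then remains to decide which of these cosine values genuinely arise from an $n$-th root of unity carrying the matching sign $\varepsilon$, and this bookkeeping is the only delicate point. The fractional solutions $c=\pm\tfrac14$ (respectively $d=\pm\tfrac14$) can be discarded outright: for a root of unity $\zeta$ the number $\zeta+\zeta^{-1}$ (respectively $\zeta^2+\zeta^{-2}$) is an algebraic integer, whereas $2\cdot(\pm\tfrac14)=\pm\tfrac12$ is rational but not an integer. The remaining extraneous values are removed using the parity of $n/2$. In case~(i) we have $4\mid n$, so $\zeta=-1$ gives $\varepsilon=(-1)^{n/2}=1$ and the relevant branch is $\varepsilon=1$, leaving $c=-1$, that is $\zeta=-1$; the solution $c=1$ sits in the $\varepsilon=-1$ branch yet forces $\zeta=1$, which has $\varepsilon=1$, a contradiction. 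In case~(ii) we have $n\equiv_4 2$, so $n/2$ is odd and $\zeta=-1$ gives $\varepsilon=-1$; within that branch the solution $d=1$ means $\cos 2\theta=1$, hence $\zeta\in\{1,-1\}$, and only $\zeta=-1$ is consistent with $\varepsilon=-1$, while the solution $d=-1$ lies in the $\varepsilon=1$ branch and forces $\cos 2\theta=-1$, i.e.\ $\zeta\in\{i,-i\}$, which fail to be $n$-th roots of unity whenever $4\nmid n$. In every case the sole surviving root is $\zeta=-1$, and a direct substitution confirms Expression~\eqref{fam_aux15} indeed vanishes there, so Lemma~\ref{circ_7_lemma} yields the claim.
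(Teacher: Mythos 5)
Your proof is correct, and it takes a genuinely different route from the paper's. Both arguments begin the same way, by invoking Lemma~\ref{circ_7_lemma} and splitting the $n$-th roots of unity according to the sign $\varepsilon=\zeta^{n/2}\in\{\pm1\}$, but from there the methods diverge. The paper substitutes $\zeta^{n/2}=\pm1$ into Polynomial~\eqref{fam_poly1} to obtain an explicit palindromic quartic (resp.\ octic) in each branch, factors it by hand, e.g.\ $2x^4+3x^3+2x^2+3x+2=(x+1)^2\,(2x^2-x+2)$, and then appeals to the irreducibility of cyclotomic polynomials to rule out roots of unity in the non-trivial factor. You instead exploit that the expression is real-valued on the unit circle and collapse it to the single quadratic $8c^2+6\varepsilon c-2=0$ in $c=\cos\theta$ (resp.\ $d=\cos 2\theta$), dispose of the fractional roots $\pm\tfrac14$ via the observation that $\zeta+\zeta^{-1}$ is a rational algebraic integer and hence an integer, and then match the surviving roots $c=\pm1$ against the parity of $n/2$. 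The two approaches buy different things: the paper's factorization argument is mechanical and extends verbatim to non-palindromic or higher-degree situations (it is reused in Section~\ref{sc_11_circ}), whereas your cosine reduction is shorter, explains \emph{why} the same quadratic governs both cases (i) and (ii), and replaces the ``not divisible by any cyclotomic polynomial'' check with a one-line integrality argument. The only point worth making explicit in a write-up is the observation you state in passing, namely that for each fixed $\zeta$ the value of Expression~\eqref{fam_aux15} equals $8c^2+6\varepsilon c-2$ exactly (so that vanishing of the expression is equivalent to $c$ solving the quadratic in the branch determined by $\varepsilon$); your computation does establish this, so there is no gap.
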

\begin{proof}
    First, suppose that $n \ge 4, \, 4 \mid n$ and $\alpha = \beta = 1$. In this case, Polynomial~\eqref{fam_poly1} transforms into
    \begin{equation}\label{fam_aux16}
        3x^{3 + \frac{n}{2}} + 3x^{1 + \frac{n}{2}} + 2x^4 + 2x^2 + 2 .
    \end{equation}
    For any $n$-th root of unity $\zeta \in \mathbb{C}$ such that $\zeta^\frac{n}{2} = -1$, we have that $\zeta$ is a root of Polynomial \eqref{fam_aux16} if and only if it is a root of
    \[
        2x^4 - 3x^3 + 2x^2 - 3x + 2 = (x-1)^2 \, (2x^2 + x + 2) .
    \]
    The $2x^2 + x + 2$ polynomial is obviously not divisible by any cyclotomic polynomial, hence it cannot contain any root of unity among its roots, while $\zeta_0 = 1$ does not satisfy $\zeta_0^\frac{n}{2} = -1$. Thus, Polynomial \eqref{fam_aux16} contains no roots among the $n$-th roots of unity $\zeta$ that satisfy $\zeta^\frac{n}{2} = -1$. Similarly, for any $\zeta \in \mathbb{C}$ such that $\zeta^\frac{n}{2} = 1$, it is clear that $\zeta$ is a root of Polynomial \eqref{fam_aux16} if and only if it is a root of
    \[
        2x^4 + 3x^3 + 2x^2 + 3x + 2 = (x+1)^2 \, (2x^2 - x + 2) .
    \]
    Since $2x^2 + x + 2$ is not divisible by any cyclotomic polynomial, from $(-1)^\frac{n}{2} = 1$ we obtain that Polynomial \eqref{fam_aux16} contains only $-1$ as a root among all the $n$-th roots of unity. Thus, due to Lemma~\ref{circ_7_lemma}, we conclude that $G^{(7)}(n; \alpha, \beta)$ gives rise to a nut graph.

    Now, suppose that $n \ge 6, \, n \equiv_4 2$ and $\alpha = \beta = 2$. In this scenario, Polynomial~\eqref{fam_poly1} transforms into
    \begin{equation}\label{fam_aux17}
        3x^{6 + \frac{n}{2}} + 3x^{2 + \frac{n}{2}} + 2x^8 + 2x^4 + 2 .
    \end{equation}
    For an arbitrary $n$-th root of unity $\zeta \in \mathbb{C}$ satisfying $\zeta^\frac{n}{2} = 1$, we see that $\zeta$ is a root of Polynomial \eqref{fam_aux17} if and only if it is a root of
    \[
        2x^8 + 3x^6 + 2x^4 + 3x^2 + 2 = (x^2 + 1)^2 \, (2x^4 - x^2 + 2) .
    \]
    However, it is easy to verify that $2x^4 - x^2 + 2$ is not divisible by any cyclotomic polynomial, which means that this polynomial cannot contain a root of unity among its roots. Since $n \equiv_4 2$, the complex numbers $i$ and $-i$ do not represent an $n$-th root of unity, hence Polynomial \eqref{fam_aux17} has no root $\zeta$ that is an $n$-th root of unity for which $\zeta^\frac{n}{2} = 1$. Finally, for any $\zeta \in \mathbb{C}, \, \zeta^n = 1$ such that $\zeta^\frac{n}{2} = -1$, we have that $\zeta$ is a root of Polynomial \eqref{fam_aux17} if and only if it is a root of
    \[
        2x^8 - 3x^6 + 2x^4 - 3x^2 + 2 = (x-1)^2 \, (x+1)^2 \, (2x^4 + x^2 + 2) .
    \]
    The $2x^4 + x^2 + 2$ polynomial has no roots of unity among its roots since it is obviously not divisible by any cyclotomic polynomial. For $\zeta_0 = 1$, we have $\zeta_0^\frac{n}{2} = 1$, while for $\zeta_0 = -1$, it follows that $\zeta_0^\frac{n}{2} = -1$ since $2 \nmid \frac{n}{2}$. We may conclude that Polynomial \eqref{fam_aux17} does contain only $-1$ as a root among all the $n$-th roots of unity. Therefore, Lemma \ref{circ_7_lemma} guarantees that the graph derived from $G^{(7)}(n; \alpha, \beta)$ is a nut graph.
\end{proof}

Proposition \ref{circ_7_prop} implies that for any $n \ge 4, \, 2 \mid n$, there exists a cubic $7$-circulant nut graph of order $7n$. As a direct consequence, we obtain that there are infinitely many cubic $7$-circulant nut graphs.

\begin{corollary}
    There exist infinitely many cubic $7$-circulant nut graphs.
\end{corollary}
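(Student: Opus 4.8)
The plan is to read the corollary as an immediate harvest of Proposition~\ref{circ_7_prop}. The technical work---translating the nut condition for the derived graph into a root-of-unity condition on a single polynomial (Lemma~\ref{circ_7_lemma}), and then verifying that condition for two explicit parameter regimes (Proposition~\ref{circ_7_prop})---has already been carried out, so what remains is purely bookkeeping. In particular, no new spectral or number-theoretic input is needed.

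First I would fix an arbitrary even integer $n \ge 4$ and split into the two residue classes modulo $4$. If $4 \mid n$, then Proposition~\ref{circ_7_prop}(i) guarantees that the choice $\alpha = \beta = 1$ makes the graph derived from $G^{(7)}(n; \alpha, \beta)$ a nut graph; if instead $n \equiv_4 2$, then Proposition~\ref{circ_7_prop}(ii) does the same with $\alpha = \beta = 2$. In either case the hypotheses $1 \le \alpha, \beta < \tfrac{n}{2}$ required by the construction are satisfied once $n \ge 4$ (respectively $n \ge 6$, which is automatic for $n \equiv_4 2$ since $n = 2$ is excluded), so the construction is legitimate and, as noted when $G^{(7)}$ was introduced, it yields a simple cubic $7$-circulant graph. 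Thus for every even $n \ge 4$ there exists a cubic $7$-circulant nut graph whose order is exactly $7n$.

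It then remains only to argue that this produces infinitely many distinct graphs. Since the order of the derived graph equals $7n$ and the assignment $n \mapsto 7n$ is injective, graphs arising from different values of $n$ have different orders and are therefore pairwise non-isomorphic. As $n$ ranges over the infinitely many even integers at least $4$, the resulting orders $7n$ are pairwise distinct and unbounded, giving infinitely many cubic $7$-circulant nut graphs.

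I do not anticipate any genuine obstacle at this stage: the entire difficulty was concentrated in establishing the polynomial factorizations of Proposition~\ref{circ_7_prop} and, upstream of it, the reduction of the null-space computation to a circulant spectrum in Lemma~\ref{circ_7_lemma}. The only point that warrants a sentence of care is confirming that distinct parameter choices yield non-isomorphic graphs, and this is settled at once by comparing orders.
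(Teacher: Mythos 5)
Your proposal is correct and matches the paper's argument exactly: the paper also derives the corollary by noting that Proposition~\ref{circ_7_prop} gives a cubic $7$-circulant nut graph of order $7n$ for every even $n \ge 4$, with the two residue classes modulo $4$ covered by the two parameter choices, and distinctness of orders yielding infinitely many graphs. No differences worth noting.
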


\section{Cubic 11-circulant nut graphs}\label{sc_11_circ}

We apply a similar strategy as in Section \ref{sc_7_circ} to show that there are also infinitely many cubic $11$-circulant nut graphs. First, for each $n, \alpha, \beta \in \mathbb{N}$ such that $2 \mid n$ and $1 \le \alpha, \beta < n$, let $G^{(11)}(n; \alpha, \beta)$ denote the $\mathbb{Z}_n$-voltage pregraph shown in Figure~\ref{inf_family_2}. It is obvious that the derived graph of each $G^{(11)}(n; \alpha, \beta)$ is a simple cubic $11$-circulant graph. For convenience, we will label its vertices as $a'_j, a''_j, b'_j, b''_j, \ldots, f_j, \, j \in \mathbb{Z}_n$ instead of $(a', j), (a'', j), \ldots, (f, j), \, j \in \mathbb{Z}_n$. We proceed by giving an analog to Lemma \ref{circ_7_lemma}.

\begin{figure}[htb]
    \centering
    \begin{tikzpicture}[scale=0.9]
        \node[state, minimum size=0.90cm, thick] (1) at (0, 3.236) {$c'$};
        \node[state, minimum size=0.90cm, thick] (2) at (1.902, 2.618) {$d'$};
        \node[state, minimum size=0.90cm, thick] (3) at (3.078, 1) {$e'$};
        \node[state, minimum size=0.90cm, thick] (4) at (3.078, -1) {$e''$};
        \node[state, minimum size=0.90cm, thick] (5) at (1.902, -2.618) {$d''$};
        \node[state, minimum size=0.90cm, thick] (6) at (0, -3.236) {$c''$};
        \node[state, minimum size=0.90cm, thick] (7) at (-1.902, -2.618) {$b''$};
        \node[state, minimum size=0.90cm, thick] (8) at (-3.078, -1) {$a''$};
        \node[state, minimum size=0.90cm, thick] (9) at (-3.078, 1) {$a'$};
        \node[state, minimum size=0.90cm, thick] (10) at (-1.902, 2.618) {$b'$};
        \node[state, minimum size=0.90cm, thick] (11) at (4.810, 0) {$f$};

        \draw[thick] (1) to node[pos=0.5, below] {$0$} (2);
        \draw[thick] (2) to node[pos=0.4, below, xshift=-0.1cm] {$0$} (3);
        \draw[thick] (3) to node[pos=0.5, left] {$0$} (4);
        \draw[thick] (4) to node[pos=0.6, above, xshift=-0.1cm] {$0$} (5);
        \draw[thick] (5) to node[pos=0.5, above] {$0$} (6);
        \draw[thick] (6) to node[pos=0.5, above] {$0$} (7);
        
        \draw[edge, thick] (8) to[bend right=30] node[pos=0.45, left, xshift=-0.05cm] {$\beta$} (7);
        \draw[thick] (8) to[bend left=30] node[pos=0.4, right, xshift=0.05cm] {$0$} (7);
        
        \draw[thick] (8) to node[pos=0.5, right] {$0$} (9);
        
        \draw[edge, thick] (9) to[bend left=30] node[pos=0.45, left, xshift=-0.05cm] {$\alpha$} (10);
        \draw[thick] (9) to[bend right=30] node[pos=0.4, right, xshift=0.05cm] {$0$} (10);
        
        \draw[thick] (10) to node[pos=0.5, below] {$0$} (1);
        \draw[thick] (3) to node[pos=0.55, above] {$0$} (11);
        \draw[thick] (4) to node[pos=0.55, below] {$0$} (11);

        \draw[thick] (1) to node[pos=1, right] {$\frac{n}{2}$} (0, 4.436);
        \draw[thick] (6) to node[pos=1, right] {$\frac{n}{2}$} (0, -4.436);
        \draw[thick] (2) to node[pos=1, right] {$\frac{n}{2}$} (2.607, 3.589);
        \draw[thick] (5) to node[pos=1, right] {$\frac{n}{2}$} (2.607, -3.589);
        \draw[thick] (11) to node[pos=1, right] {$\frac{n}{2}$} (6.010, 0);
    \end{tikzpicture}
    \caption{The $\mathbb{Z}_n$-voltage pregraph $G^{(11)}(n; \alpha, \beta)$ that gives rise to a nut graph whenever $n \ge 6, \, n \equiv_4 2$ and $\alpha=\beta=2$.}
    \label{inf_family_2}
\end{figure}
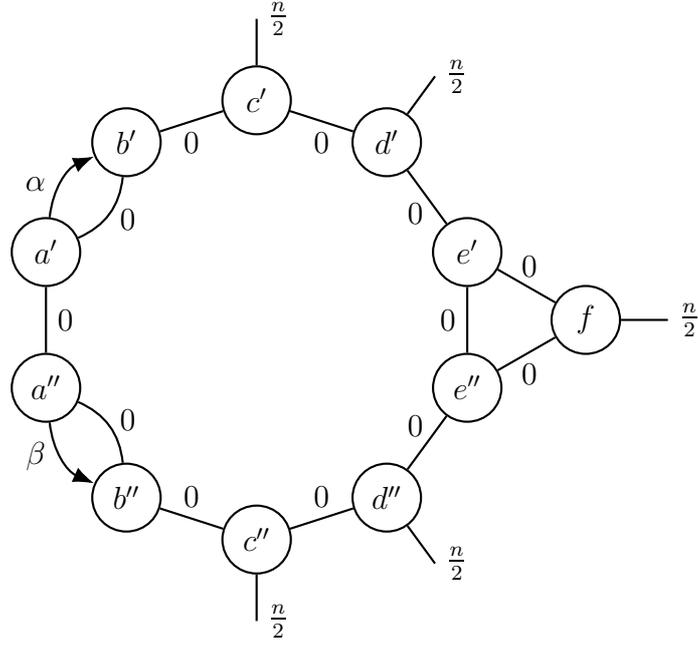

\begin{lemma}\label{circ_11_lemma}
    For any $2 \mid n$ and $1 \le \alpha, \beta < n$, the $\mathbb{Z}_n$-voltage pregraph $G^{(11)}(n; \alpha, \beta)$ gives rise to a nut graph if and only if $2 \mid \alpha, \beta$ and $n \equiv_4 2$, while the $\mathbb{Z}[x]$ polynomial
    \begin{align}
        \label{fam_poly2}
        \begin{split}
            x^{2\alpha + 2\beta} + x^{2\alpha + \beta} &+ x^{\alpha + 2\beta} + x^{\alpha} + x^{\beta} + 1\\
            &+ x^{2\alpha + \beta + \frac{n}{2}} + x^{\alpha + 2\beta + \frac{n}{2}} + x^{\alpha + \frac{n}{2}} + x^{\beta + \frac{n}{2}} + x^{2\alpha + \frac{n}{2}} + x^{2\beta + \frac{n}{2}}
        \end{split}
    \end{align}
    contains only $-1$ as a root among all the $n$-th roots of unity.
\end{lemma}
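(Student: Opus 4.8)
The plan is to follow the template of Lemma~\ref{circ_7_lemma} essentially verbatim, the only genuinely new ingredient being a fullness analysis that will force the parity hypotheses. First I would invoke Lemma~\ref{local_condition_lemma} to write down the eleven families of local conditions, one for each orbit $a', a'', b', b'', c', c'', d', d'', e', e'', f$, taking care that each semi-edge of voltage $\tfrac n2$ contributes a term whose index is shifted by $\tfrac n2$, that each double edge contributes two terms (one unshifted and one shifted by $\alpha$ or $\beta$), and that the remaining voltage-$0$ edges contribute unshifted terms. I would also record the reflection symmetry of $G^{(11)}(n;\alpha,\beta)$ that swaps primed and double-primed orbits, fixes $f$, and interchanges $\alpha \leftrightarrow \beta$; this symmetry explains the $\alpha \leftrightarrow \beta$ invariance of Polynomial~\eqref{fam_poly2} and halves the bookkeeping in the elimination.

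Next I would carry out Gaussian elimination exactly as in \eqref{fam_aux8}--\eqref{fam_aux13}, solving ten of the eleven equation families for ten of the orbit-vectors as integer shift-combinations of a single remaining ``master'' orbit, and substituting into the last family to obtain one circulant recurrence on that orbit. As in Lemma~\ref{circ_7_lemma}, the resulting reconstruction map has full column rank, so $\dim\mathcal N(G) = \dim\mathcal N(C)$, where $C$ is the circulant matrix encoding this recurrence. Reading off the spectrum of $C$ over the $n$-th roots of unity and clearing denominators by multiplying through by a suitable power of $\zeta$ identifies the vanishing set of the spectrum with the set of $n$-th roots of unity that are roots of Polynomial~\eqref{fam_poly2}.

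The dimension argument is then identical to Lemma~\ref{circ_7_lemma}: if $G$ is a nut graph then $\dim\mathcal N(C)=1$, and since $\zeta=1$ is never a root and every non-real root of unity occurs together with its conjugate, the unique root must be $\zeta=-1$, giving the polynomial condition. At this point I would also record the direct evaluation
\begin{equation*}
    \bigl(2 + 2(-1)^{\alpha} + 2(-1)^{\beta}\bigr)\bigl(1 + (-1)^{n/2}\bigr),
\end{equation*}
which is the value of Polynomial~\eqref{fam_poly2} at $x=-1$; since $2 + 2(-1)^{\alpha} + 2(-1)^{\beta}$ lies in $\{-2,2,6\}$ and is never zero, this shows that $-1$ is a root precisely when $\tfrac n2$ is odd, i.e.\ when $n \equiv_4 2$. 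Thus the hypothesis $n \equiv_4 2$ is exactly what makes $-1$ an admissible root of the polynomial.

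The crux, and the step I expect to be the main obstacle, is the fullness check, which has no counterpart in Lemma~\ref{circ_7_lemma}, where the reconstructed vector was automatically full. When $\dim\mathcal N(C)=1$ the master orbit carries the alternating eigenvector $\bigl((-1)^j\bigr)_j$ associated with $\zeta=-1$, and I would reconstruct every orbit-entry from the elimination relations, each emerging as $(-1)^j$ times a constant assembled from $(-1)^\alpha$, $(-1)^\beta$ and $(-1)^{n/2}$. For instance, a relation of the shape $u(c'_j) = -u(a'_{j-\alpha}) - u(a'_j)$ yields $u(c'_j) = -(-1)^j\bigl(1 + (-1)^\alpha\bigr)$, which vanishes unless $\alpha$ is even, and the symmetric relation on the double-primed side forces $\beta$ even. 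The real work is to enumerate all ten reconstructed orbits and verify that $2\mid\alpha$, $2\mid\beta$, together with $n\equiv_4 2$ (already forced above), are precisely the conditions under which no entry vanishes. Combining this fullness characterization with the dimension argument then yields both directions of the stated equivalence.
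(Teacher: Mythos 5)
Your plan matches the paper's proof essentially step for step: the same elimination of the local conditions down to a single circulant recurrence on one master orbit, the same dimension count forcing $\zeta=-1$ to be the unique vanishing $n$-th root of unity, and the same fullness analysis (via the relations expressing the $a'$- and $a''$-entries in terms of the master orbit) forcing $2\mid\alpha,\beta$. The one small variation --- deriving $n\equiv_4 2$ by evaluating Polynomial~\eqref{fam_poly2} at $x=-1$ and factoring the value as $\bigl(2+2(-1)^{\alpha}+2(-1)^{\beta}\bigr)\bigl(1+(-1)^{n/2}\bigr)$, whose first factor is never zero --- is correct and arguably cleaner than the paper's route, which instead notes that $4\mid n$ would collapse the coupled equations on the $e'$- and $e''$-orbits to $2u(e'_j)+4u(e''_j)=0$ and $2u(e''_j)+4u(e'_j)=0$ and thus annihilate those orbits.
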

\begin{proof}
    To begin, let $G$ be the graph derived from $G^{(11)}(n; \alpha, \beta)$. Lemma \ref{local_condition_lemma} tells us that any given vector $u \in \mathbb{R}^{V(G)}$ belongs to $\mathcal{N}(G)$ if and only if it represents a solution to the following system of equations:
    \allowdisplaybreaks
    \begin{alignat}{2}
        \label{fam_aux18} u\left( a''_j \right) + u\left( b'_j \right) + u\left( b'_{j + \alpha}\right) &= 0 \qquad && (j \in \mathbb{Z}_n) ,\\
        \label{fam_aux19} u\left( a'_j \right) + u\left( b''_j \right) + u\left( b''_{j + \beta}\right) &= 0 \qquad && (j \in \mathbb{Z}_n) ,\\
        \label{fam_aux20} u\left( a'_j \right) + u\left( a'_{j - \alpha}\right) + u\left( c'_j \right) &= 0 \qquad && (j \in \mathbb{Z}_n) ,\\
        \label{fam_aux21} u\left( a''_j \right) + u\left( a''_{j - \beta}\right) + u\left( c''_j \right) &= 0 \qquad && (j \in \mathbb{Z}_n) ,\\
        \label{fam_aux22} u\left( b'_j \right) + u\left( c'_{j + \frac{n}{2} }\right) + u\left( d'_j \right) &= 0 \qquad && (j \in \mathbb{Z}_n) ,\\
        \label{fam_aux23} u\left( b''_j \right) + u\left( c''_{j + \frac{n}{2} }\right) + u\left( d''_j \right) &= 0 \qquad && (j \in \mathbb{Z}_n) ,\\
        \label{fam_aux24} u\left( c'_j \right) + u\left( d'_{j + \frac{n}{2} }\right) + u\left( e'_j \right) &= 0 \qquad && (j \in \mathbb{Z}_n) ,\\
        \label{fam_aux25} u\left( c''_j \right) + u\left( d''_{j + \frac{n}{2} }\right) + u\left( e''_j \right) &= 0 \qquad && (j \in \mathbb{Z}_n) ,\\
        \label{fam_aux26} u\left( d'_j \right) + u\left( e''_j \right) + u\left( f_j \right) &= 0 \qquad && (j \in \mathbb{Z}_n) ,\\
        \label{fam_aux27} u\left( d''_j \right) + u\left( e'_j \right) + u\left( f_j \right) &= 0 \qquad && (j \in \mathbb{Z}_n) ,\\
        \label{fam_aux28} u\left( e'_j \right) + u\left( e''_j \right) + u\left( f_{j + \frac{n}{2}} \right) &= 0 \qquad && (j \in \mathbb{Z}_n) .
    \end{alignat}
    
    Equation~\eqref{fam_aux28} gives us
    \begin{equation}
        \label{fam_aux29} u\left( f_j \right) = -u\left( e'_{j + \frac{n}{2}} \right) - u\left( e''_{j + \frac{n}{2}} \right) \qquad (j \in \mathbb{Z}_n) .
    \end{equation}
    By plugging in Equation~\eqref{fam_aux29} into Equations~\eqref{fam_aux26} and \eqref{fam_aux27}, we are able to reach
    \begin{alignat}{2}
        \label{fam_aux30} u\left( d'_j \right) &= u\left( e'_{j + \frac{n}{2}} \right) + u\left( e''_{j + \frac{n}{2}} \right) - u\left( e''_j \right) \qquad && (j \in \mathbb{Z}_n),\\
        \label{fam_aux31} u\left( d''_j \right) &= u\left( e'_{j + \frac{n}{2}} \right) + u\left( e''_{j + \frac{n}{2}} \right) - u\left( e'_j \right) \qquad && (j \in \mathbb{Z}_n) .
    \end{alignat}
    Furthermore, by plugging in Equation \eqref{fam_aux30} into Equation \eqref{fam_aux24} and Equation \eqref{fam_aux31} into Equation \eqref{fam_aux25}, we get
    \begin{alignat}{2}
        \label{fam_aux32} u\left( c'_j \right) &= -2u\left( e'_j \right) -u\left( e''_j \right) + u\left( e''_{j + \frac{n}{2}} \right) \qquad && (j \in \mathbb{Z}_n),\\
        \label{fam_aux33} u\left( c''_j \right) &= -u\left( e'_j \right) -2u\left( e''_j \right) + u\left( e'_{j + \frac{n}{2}} \right) \qquad && (j \in \mathbb{Z}_n) .
    \end{alignat}
    We plug in Equations \eqref{fam_aux30} and \eqref{fam_aux32} into Equation \eqref{fam_aux22} and Equations~\eqref{fam_aux31} and \eqref{fam_aux33} into Equation \eqref{fam_aux23} to obtain
    \begin{alignat}{2}
        \label{fam_aux34} u\left( b'_j \right) &= u\left( e'_{j+\frac{n}{2}} \right) \qquad && (j \in \mathbb{Z}_n),\\
        \label{fam_aux35} u\left( b''_j \right) &= u\left( e''_{j+\frac{n}{2}} \right) \qquad && (j \in \mathbb{Z}_n) .
    \end{alignat}
    We are now able to plug in Equation \eqref{fam_aux35} into Equation \eqref{fam_aux19} and Equation \eqref{fam_aux34} into Equation \eqref{fam_aux18} to conclude that
    \begin{alignat}{2}
        \label{fam_aux36} u\left( a'_j \right) &= -u\left( e''_{j+\frac{n}{2}} \right) - u\left( e''_{j+\beta + \frac{n}{2}} \right) \qquad && (j \in \mathbb{Z}_n),\\
        \label{fam_aux37} u\left( a''_j \right) &= -u\left( e'_{j+\frac{n}{2}} \right) - u\left( e'_{j+\alpha+\frac{n}{2}} \right) \qquad && (j \in \mathbb{Z}_n) .
    \end{alignat}

    By plugging in Equations \eqref{fam_aux32} and \eqref{fam_aux36} into Equation \eqref{fam_aux20} and Equations \eqref{fam_aux33} and \eqref{fam_aux37} into Equation \eqref{fam_aux21}, a simple computation yields
    \begin{alignat}{2}
        \label{fam_aux38} 2u\left( e'_j \right) + u\left( e''_j \right) + u\left( e''_{j + \beta + \frac{n}{2}} \right) + u\left( e''_{j - \alpha + \frac{n}{2}} \right) + u\left( e''_{j + \beta - \alpha + \frac{n}{2}} \right) &= 0,\\
        \label{fam_aux39} 2u\left( e''_j \right) + u\left( e'_j \right) + u\left( e'_{j + \alpha + \frac{n}{2}} \right) + u\left( e'_{j - \beta + \frac{n}{2}} \right) + u\left( e'_{j + \alpha - \beta + \frac{n}{2}} \right) &= 0,
    \end{alignat}
    for each $j \in \mathbb{Z}_n$. Equation \eqref{fam_aux38} can now be used to express $u\left( e'_j \right)$ in terms of the $u\left( e''_j \right)$ values, thus giving
    \begin{equation}
        \label{fam_aux40} u\left( e'_j \right) = -\frac{1}{2} \left( u\left( e''_j \right) + u\left( e''_{j + \beta + \frac{n}{2}} \right) + u\left( e''_{j - \alpha + \frac{n}{2}} \right) + u\left( e''_{j + \beta - \alpha + \frac{n}{2}} \right) \right) .
    \end{equation}
    Finally, by plugging in Equation~\eqref{fam_aux40} into Equation~\eqref{fam_aux39}, we obtain
    \begin{align}
        \label{fam_aux41}\begin{split}
            u\left(e''_{j+\alpha+\beta}\right) &+ u\left(e''_{j+\alpha}\right) + u\left(e''_{j+\beta}\right) + u\left(e''_{j-\alpha}\right) + u\left(e''_{j-\beta}\right) + u\left(e''_{j-\alpha-\beta}\right)\\
            &+ u\left(e''_{j+\alpha+\frac{n}{2}}\right) + u\left(e''_{j+\beta+\frac{n}{2}}\right) + u\left(e''_{j-\alpha+\frac{n}{2}}\right) + u\left(e''_{j-\beta+\frac{n}{2}}\right)\\
            &+ u\left(e''_{j+\alpha-\beta+\frac{n}{2}}\right) + u\left(e''_{j-\alpha+\beta+\frac{n}{2}}\right) = 0 \qquad (j \in \mathbb{Z}_n) .
        \end{split}
    \end{align}
    It is not difficult to establish that the system of Equations \eqref{fam_aux18}--\eqref{fam_aux28} is equivalent to the system of Equations~\eqref{fam_aux29}--\eqref{fam_aux37}, \eqref{fam_aux40} and \eqref{fam_aux41}. Therefore, it is possible to apply the same strategy as in Lemma \ref{circ_7_lemma} to observe that $\dim \mathcal{N}(G)$ equals the dimension of the solution set of Equation~\eqref{fam_aux41}. This solution set represents the null space of a circulant matrix $C \in \mathbb{R}^{\mathbb{Z}_n \times \mathbb{Z}_n}$ whose spectrum is given by
    \begin{align}
        \label{fam_aux42}
        \begin{split}
            \zeta^{\alpha + \beta} + \zeta^{\alpha} &+ \zeta^{\beta} + \zeta^{-\alpha} + \zeta^{-\beta} + \zeta^{-\alpha-\beta}\\
            &+ \zeta^{\alpha + \frac{n}{2}} + \zeta^{\beta + \frac{n}{2}} + \zeta^{-\alpha + \frac{n}{2}} + \zeta^{-\beta + \frac{n}{2}} + \zeta^{\alpha - \beta + \frac{n}{2}} + \zeta^{-\alpha + \beta + \frac{n}{2}},
        \end{split}
    \end{align}
    as $\zeta \in \mathbb{C}$ ranges over the $n$-th roots of unity.

    Now, if graph $G$ is a nut graph, then $\dim \mathcal{N}(G) = \dim \mathcal{N}(C)$ tells us that Expression~\eqref{fam_aux42} gives zero for precisely one $n$-th root of unity $\zeta$. In an analogous manner as in Lemma \ref{circ_7_lemma}, we see that this $n$-th root of unity must be precisely $-1$. By multiplying Expression \eqref{fam_aux42} with $\zeta^{\alpha + \beta}$, we observe that Polynomial \eqref{fam_poly2} contains only $-1$ as a root among all the $n$-th roots of unity. Moreover, $\mathcal{N}(C)$ is spanned by the vector $\begin{bmatrix} 1 & -1 & 1 & -1 & \cdots & 1 & -1 \end{bmatrix}^\intercal$. From here, Equations~\eqref{fam_aux36} and \eqref{fam_aux37} imply that $2 \mid \alpha, \beta$ must be true, since otherwise, each solution $u \in \mathbb{R}^{V(G)}$ would satisfy $u\left( a_j' \right) = 0$ or $u\left( a_j'' \right) = 0$, both of which would lead to a contradiction. Furthermore, $4 \mid n$ cannot be true because in this scenario, Equations \eqref{fam_aux38} and \eqref{fam_aux39} would get down to
    \[
        2u\left( e'_j \right) + 4u\left( e''_j \right) = 0 \qquad \mbox{and} \qquad 2u\left( e''_j \right) + 4u\left( e'_j \right) = 0,
    \]
    thus yielding $u\left( e'_j \right) = u\left( e''_j \right) = 0$. Therefore, $n \equiv_4 2$ holds.
    
    On the other hand, if we suppose that $2 \mid \alpha, \beta$ and $n \equiv_4 2$, while Polynomial~\eqref{fam_poly2} contains only $-1$ as a root among all the $n$-th roots of unity, it follows that $\dim \mathcal{N}(G) = \dim \mathcal{N}(C) = 1$. Moreover, we have that $\mathcal{N}(C)$ is spanned by the vector $\begin{bmatrix} 1 & -1 & 1 & -1 & \cdots & 1 & -1 \end{bmatrix}^\intercal$, hence if we let $E''$ comprise the vertices from the $e''$-orbit of $G$, we may conclude that Equation \eqref{fam_aux41} contains a solution $w_0 \in \mathbb{R}^{E''}$ such that
    \[
        w_0\left( e''_j \right) = (-1)^j \qquad (0 \le j < n).
    \]
    Equations~\eqref{fam_aux29}--\eqref{fam_aux37} and \eqref{fam_aux40} can now be used to extend $w_0$ to vector $u_0 \in \mathcal{N}(G)$ that is full. Indeed, Equation~\eqref{fam_aux40} yields $u_0\left( e'_j \right) = u_0\left( e''_j \right)$, thus making it trivial to observe that $u_0$ is a full vector by applying the remaining equations. We conclude that graph $G$ is a nut graph.
\end{proof}

Here is the main result of this section.

\begin{proposition}\label{circ_11_prop}
    The graph derived from $G^{(11)}(n; \alpha, \beta)$ is a nut graph whenever $n \ge 6, \, n \equiv_4 2$ and $\alpha = \beta = 2$.
\end{proposition}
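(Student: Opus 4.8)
The plan is to invoke Lemma~\ref{circ_11_lemma} with $\alpha = \beta = 2$. Since these values are even and $n \equiv_4 2$ by hypothesis, the parity requirements $2 \mid \alpha, \beta$ and $n \equiv_4 2$ of the lemma are automatically satisfied, so the whole task reduces to showing that Polynomial~\eqref{fam_poly2} has $-1$ as its only root among the $n$-th roots of unity. Substituting $\alpha = \beta = 2$ collapses the twelve monomials into
\[
    P(x) = x^8 + 2x^6 + 2x^2 + 1 + 2x^{\frac{n}{2}}\left( x^6 + x^4 + x^2 \right),
\]
so, exactly as in Proposition~\ref{circ_7_prop}, I would split the analysis according to whether an $n$-th root of unity $\zeta$ satisfies $\zeta^{\frac{n}{2}} = 1$ or $\zeta^{\frac{n}{2}} = -1$, these being the only two options since $\zeta^n = 1$.

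In the branch $\zeta^{\frac{n}{2}} = 1$, evaluating $P$ amounts to working with $x^8 + 4x^6 + 2x^4 + 4x^2 + 1$, and the key step is the factorization
\[
    x^8 + 4x^6 + 2x^4 + 4x^2 + 1 = \left( x^4 + 1 \right)\left( x^4 + 4x^2 + 1 \right) = \Phi_8(x)\left( x^4 + 4x^2 + 1 \right),
\]
which I would locate via the palindromic substitution $y = x^2$, $z = y + y^{-1}$. The roots of $\Phi_8(x)$ are the primitive $8$-th roots of unity, which are $n$-th roots of unity only when $8 \mid n$, impossible here because $n \equiv_4 2$; and $x^4 + 4x^2 + 1$ is not divisible by any cyclotomic polynomial (its roots satisfy $x^2 = -2 \pm \sqrt{3}$ and hence lie off the unit circle), so it contributes no root of unity whatsoever. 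Thus this branch yields no root among the $n$-th roots of unity.

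In the branch $\zeta^{\frac{n}{2}} = -1$, evaluating $P$ amounts to working with $x^8 - 2x^4 + 1 = (x^4 - 1)^2 = (x-1)^2 (x+1)^2 (x^2+1)^2$, whose roots of unity are $1, -1, i, -i$. Here $\zeta = 1$ is excluded since $1^{\frac{n}{2}} = 1 \neq -1$; the values $\pm i$ have order $4$ and are $n$-th roots of unity only if $4 \mid n$, again impossible under $n \equiv_4 2$; and $\zeta = -1$ does satisfy $(-1)^{\frac{n}{2}} = -1$ precisely because $n \equiv_4 2$ forces $\frac{n}{2}$ to be odd. Hence $-1$ is the unique $n$-th root of unity that is a root of $P$ in this branch, and combining the two branches shows that Polynomial~\eqref{fam_poly2} has only $-1$ as a root among the $n$-th roots of unity, so Lemma~\ref{circ_11_lemma} certifies that the derived graph is a nut graph.

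The only genuinely delicate point is the interplay between the congruence $n \equiv_4 2$ and the orders of the candidate roots of unity: the same hypothesis simultaneously eliminates the primitive $8$-th roots in the first branch and the primitive $4$-th roots in the second, while guaranteeing that $-1$ survives. I expect this to be the main obstacle in the sense that it is where an error would most easily creep in, so I would verify the order constraints carefully, since confusing a divisibility such as $4 \mid n$ with $4 \mid \frac{n}{2}$ (or $8 \mid n$ with $8 \mid \frac{n}{2}$) would silently break the argument.
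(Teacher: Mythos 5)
Your proposal is correct and follows essentially the same route as the paper: reduce to Lemma~\ref{circ_11_lemma}, specialize Polynomial~\eqref{fam_poly2} at $\alpha=\beta=2$, split on $\zeta^{n/2}=\pm 1$, and use the factorizations $(x^4+1)(x^4+4x^2+1)$ and $(x-1)^2(x+1)^2(x^2+1)^2$ together with $n\equiv_4 2$ to exclude everything except $-1$. All the computations and order arguments check out.
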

\begin{proof}
    Due to Lemma \ref{circ_11_lemma}, in order to show that $G^{(11)}(n; 2, 2)$ gives rise to a nut graph whenever $n \ge 6, \, n \equiv_4 2$, it is sufficient to prove that Polynomial~\eqref{fam_poly2} contains only $-1$ as a root among all the $n$-th roots of unity. This polynomial transforms into
    \[
        x^8 + 2x^6 + 2x^2 + 1 + 2x^{6 + \frac{n}{2}} + 2x^{4 + \frac{n}{2}} + 2x^{2 + \frac{n}{2}} .
    \]
    Therefore, for any $n$-th root of unity $\zeta \in \mathbb{C}$ such that $\zeta^\frac{n}{2} = 1$, it follows that $\zeta$ is a root of Polynomial \eqref{fam_poly2} if and only if it is a root of
    \[
        x^8 + 4x^6 + 2x^4 + 4x^2 + 1 = (x^4 + 1)(x^4 + 4x^2 + 1) .
    \]
    It is straightforward to verify that the $x^4 + 4x^2 + 1$ polynomial is not divisible by any cyclotomic polynomial, which means that it cannot contain any root of unity among its roots. Besides that, the primitive eighth roots of unity do not represent an $n$-th root of unity due to $n \equiv_4 2$, hence Polynomial \eqref{fam_poly2} has no roots among the $n$-th roots of unity $\zeta$ for which $\zeta^\frac{n}{2} = 1$.

    On the other hand, for any $n$-th root of unity $\zeta$ that satisfies $\zeta^\frac{n}{2} = -1$, we obtain that $\zeta$ is a root of Polynomial \eqref{fam_poly2} if and only if it is a root of
    \[
        x^8 - 2x^4 + 1 = (x-1)^2 \, (x+1)^2 \, (x^2 + 1)^2 .
    \]
    The complex numbers $i$ and $-i$ do not represent an $n$-th root of unity since $n \equiv_4 2$, while for $\zeta_0 = 1$, we have $\zeta_0^\frac{n}{2} = 1$. For $\zeta_0 = -1$, we have $\zeta_0^\frac{n}{2} = -1$ due to $n \equiv_4 2$, hence Polynomial \eqref{fam_poly2} contains only $-1$ as a root among all the $n$-th roots of unity.
\end{proof}

As an immediate consequence of Proposition \ref{circ_11_prop}, we may conclude that for any $n \ge 6, \, n \equiv_4 2$, there exists a cubic $11$-circulant nut graph of order $11n$. This implies that there are infinitely many cubic $11$-circulant nut graphs.

\begin{corollary}
    There exist infinitely many cubic $11$-circulant nut graphs.
\end{corollary}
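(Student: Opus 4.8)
The plan is to read off the corollary directly from Proposition~\ref{circ_11_prop}. That proposition already establishes the hard direction: for every $n \ge 6$ with $n \equiv_4 2$, the derived graph of the $\mathbb{Z}_n$-voltage pregraph $G^{(11)}(n; 2, 2)$ is a cubic $11$-circulant nut graph. Since the quotient pregraph depicted in Figure~\ref{inf_family_2} has exactly eleven vertices and the voltage group is $\mathbb{Z}_n$, the derived graph has order $11n$. Thus each admissible choice of $n$ yields one concrete cubic $11$-circulant nut graph whose order I can compute explicitly.

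The only remaining step is to verify that these graphs genuinely constitute infinitely many distinct objects. To this end I would observe that the admissible set of parameters is infinite, since $n \in \{6, 10, 14, 18, \ldots\}$ all satisfy $n \ge 6$ and $n \equiv_4 2$. For these values the corresponding orders $11n$ are pairwise distinct and unbounded, so the resulting nut graphs fall into infinitely many distinct isomorphism classes. Hence there exist infinitely many cubic $11$-circulant nut graphs, as claimed.

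I do not anticipate any genuine obstacle here, as all of the substantive analysis has been carried out in Lemma~\ref{circ_11_lemma} and Proposition~\ref{circ_11_prop}. The corollary is a direct structural consequence of those results, and the sole observation required is that distinct admissible values of $n$ produce graphs of distinct orders, which precludes any collapse to finitely many isomorphism classes.
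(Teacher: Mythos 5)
Your proposal is correct and follows essentially the same route as the paper: invoke Proposition~\ref{circ_11_prop} to get a cubic $11$-circulant nut graph of order $11n$ for each $n \ge 6$ with $n \equiv_4 2$, and note that the unbounded orders force infinitely many non-isomorphic examples. No gaps.
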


\section{Pre-subdivision construction}\label{sc_subdiv}

Our next step is to show how, under certain conditions, a given cubic $\ell$-circulant nut graph can be extended to a cubic $(\ell+3)$-circulant nut graph. The following lemma plays a central role in finalizing the proof of Theorem \ref{main_th}.

\begin{figure}[htb]
    \centering
    \subfloat[$\mathbb{Z}_n$-voltage pregraph $(X, \varphi)$] {
        \centering
        \begin{tikzpicture}
            \node[state, minimum size=0.75cm, thick] (1) at (0, 0) {$a$};
            \node[state, minimum size=0.75cm, thick] (2) at (2.5, 0) {$b$};
    
            \draw[edge, thick] (1) to node[pos=0.5, below] {$\gamma$} (2);
    
            \draw[dashed, thick] (-0.8, -0.8) -- (0.8, -0.8) -- (0.8, 0.8) -- (1.7, 0.8) -- (1.7, -0.8) -- (3.3, -0.8) -- (3.3, 1.5) -- (-0.8, 1.5) -- (-0.8, -0.8);

            \node (3) at (-2, 0) {};
            \node (4) at (4.5, 0) {};
        \end{tikzpicture}
        \label{presub1}
    }\\
    \subfloat[$\mathbb{Z}_n$-voltage pregraph $(X_1, \varphi_1)$] {
        \begin{tikzpicture}
            \node[state, minimum size=0.75cm, thick] (1) at (0, 0) {$a$};
            \node[state, minimum size=0.75cm, thick] (2) at (2.5, 0) {$c$};
            \node[state, minimum size=0.75cm, thick] (3) at (5.0, 0) {$d$};
            \node[state, minimum size=0.75cm, thick] (4) at (7.5, 0) {$e$};
            \node[state, minimum size=0.75cm, thick] (5) at (10.0, 0) {$b$};
    
            \draw[edge, thick] (1) to node[pos=0.5, below] {$\gamma$} (2);
            \draw[thick] (3) to node[pos=0.5, below] {$\frac{n}{2}$} (2);
            \draw[edge, thick] (4) to node[pos=0.5, below] {$\gamma$} (3);
            \draw[edge, thick] (4) to node[pos=0.5, below] {$\gamma$} (5);

            \draw[thick] (2) to node[pos=1, right] {$\frac{n}{2}$} (2.5, -1);
            \draw[thick] (3) to node[pos=1, right] {$\frac{n}{2}$} (5.0, -1);
            \draw[thick] (4) to node[pos=1, right] {$\frac{n}{2}$} (7.5, -1);
    
            \draw[dashed, thick] (-0.8, -0.8) -- (0.8, -0.8) -- (0.8, 0.8) -- (9.2, 0.8) -- (9.2, -0.8) -- (10.8, -0.8) -- (10.8, 1.5) -- (-0.8, 1.5) -- (-0.8, -0.8);
        \end{tikzpicture}
        \label{presub2}
    }
    \caption{The $\mathbb{Z}_n$-voltage pregraphs $(X, \varphi)$ and $(X_1, \varphi_1)$ from Lemma \ref{presub_lemma}.}
\end{figure}
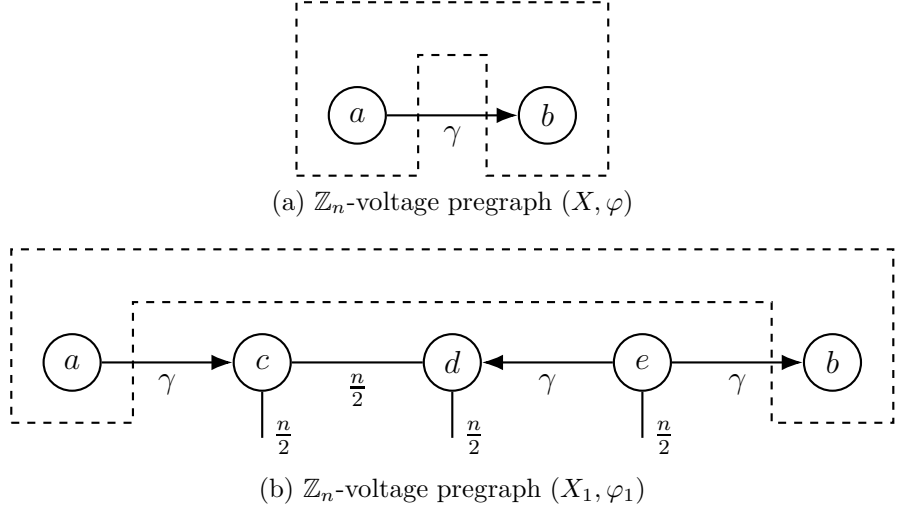

\begin{lemma}\label{presub_lemma}
    For an even $n$, let $X = (D, V; \beg, \inv)$ be a pregraph so that $(X, \varphi)$ is a $\mathbb{Z}_n$-voltage pregraph. Let $a, b \in V$ be two distinct vertices that are connected by an $(a, b)$-dart whose assigned voltage is $\gamma \in \mathbb{Z}_n$, as shown in Figure~\ref{presub1}. Furthermore, let $(X_1, \varphi_1)$ be the $\mathbb{Z}_n$-voltage pregraph that is obtained from $(X, \varphi)$ by removing the edge that contains this dart and adding three new vertices $c, d, e$, alongside certain new darts with voltages assigned as in Figure~\ref{presub2}. If $(X, \varphi)$ gives rise to a nut graph such that the $a$- and $b$-orbits are of different magnitudes, then $(X_1, \varphi_1)$ gives rise to a nut graph.
\end{lemma}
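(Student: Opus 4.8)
The plan is to show that restricting a null vector to the vertices inherited from $X$ yields a linear isomorphism $\mathcal{N}(G_1) \to \mathcal{N}(G)$, where $G$ and $G_1$ denote the derived graphs of $(X,\varphi)$ and $(X_1,\varphi_1)$. Because $G$ is a nut graph, this immediately forces $\dim \mathcal{N}(G_1) = 1$, and the only remaining work is to identify the unique (up to scaling) null vector of $G_1$ and check that it is full, which is exactly where the hypothesis on the orbit magnitudes is used. To set up, writing $v_j$ for the copy $(v,j)$, the darts of Figure~\ref{presub2} produce the edges $a_j \sim c_{j+\gamma}$, $c_j \sim d_{j+\frac{n}{2}}$, $e_j \sim d_{j+\gamma}$ and $e_j \sim b_{j+\gamma}$, together with the matchings $c_j \sim c_{j+\frac{n}{2}}$, $d_j \sim d_{j+\frac{n}{2}}$, $e_j \sim e_{j+\frac{n}{2}}$ arising from the three semi-edges of voltage $\frac{n}{2}$, while the dart $a_j \sim b_{j+\gamma}$ of $G$ is deleted. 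A quick degree count shows that every vertex of $G_1$ has degree $3$ and that, since $c,d,e$ are fresh orbits, the derived graph has no loops, parallel edges or semi-edges; hence $G_1$ is a simple cubic graph with three more vertex orbits than $G$.

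Given $u \in \mathcal{N}(G)$, I would extend it to $u_1 \in \mathbb{R}^{V(G_1)}$ by keeping $u_1 = u$ on the old vertices and setting
\[
    u_1(c_j) = u(b_j), \qquad u_1(e_j) = u(a_j), \qquad u_1(d_j) = -u\!\left(a_{j+\frac{n}{2}-\gamma}\right) - u(b_j).
\]
Using Lemma~\ref{local_condition_lemma}, I would verify the local conditions of $G_1$ orbit by orbit. For the old vertices outside the $a$- and $b$-orbits the neighbourhoods are unchanged, so there is nothing to check; at $a_j$ the deleted neighbour $b_{j+\gamma}$ is replaced by $c_{j+\gamma}$, which carries the same value, and at $b_j$ the deleted neighbour $a_{j-\gamma}$ is replaced by $e_{j-\gamma}$, again with the same value, so both reduce to the corresponding local condition of $G$. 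The conditions at $c_j$, $d_j$ and $e_j$ follow from the definition of $u_1$; the one genuine point is that the two expressions for a $d$-value coming from the $c$- and the $e$-condition agree, which holds because $\tfrac{n}{2} \equiv -\tfrac{n}{2} \pmod{n}$ makes the two relevant $a$-indices coincide. This establishes $u_1 \in \mathcal{N}(G_1)$.

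Conversely, for any $u_1' \in \mathcal{N}(G_1)$ I would subtract the local conditions at $c_j$ and $d_j$ to obtain $u_1'(e_k) = u_1'(a_k)$, and combine the $c_j$- and $e_j$-conditions (once more using $\tfrac{n}{2} \equiv -\tfrac{n}{2}$ so that the $a$-terms cancel) to obtain $u_1'(c_k) = u_1'(b_k)$. Substituting these two identities into the local conditions at $a_j$ and $b_j$ converts them into precisely the local conditions of $G$ for the restriction $u' := u_1'|_{\text{old}}$, so $u' \in \mathcal{N}(G)$; since the same relations express every $c$-, $d$- and $e$-value through $u'$, the restriction map is injective with the extension above as its inverse. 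Thus it is an isomorphism and $\dim \mathcal{N}(G_1) = \dim \mathcal{N}(G) = 1$, the unique null vector of $G_1$ being the extension $u_1$ of the full vector $u$. The only entries whose nonvanishing is not immediate are the $d$-entries $u_1(d_j) = -u(a_{j+\frac{n}{2}-\gamma}) - u(b_j)$: here Lemma~\ref{orbit_lemma} gives $|u(a_{\cdot})| = m_a$ and $|u(b_{\cdot})| = m_b$, and the hypothesis $m_a \neq m_b$ rules out cancellation, so $u_1$ is full and $G_1$ is a nut graph. I expect this final fullness check on the $d$-orbit to be the crux of the argument, since it is the sole place where the differing-magnitudes assumption is needed and is exactly what would fail for an inadmissible choice of the subdivided edge.
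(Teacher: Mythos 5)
Your proposal is correct and follows essentially the same route as the paper: derive $u(e_j)=u(a_j)$, $u(c_j)=u(b_j)$ and $u(d_j)=-u\bigl(a_{j+\frac{n}{2}-\gamma}\bigr)-u(b_j)$ from the local conditions at the three new orbits, observe that the remaining local conditions then coincide with those of $G$ so that restriction gives $\dim\mathcal{N}(G_1)=\dim\mathcal{N}(G)=1$, and use the differing $a$- and $b$-orbit magnitudes exactly where the paper does, to rule out vanishing on the $d$-orbit. The only cosmetic difference is that you phrase the dimension count as an explicit restriction/extension isomorphism and obtain $u(c_j)=u(b_j)$ by combining the $c$- and $e$-conditions rather than the $d$- and $e$-conditions.
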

\begin{proof}
    Let $G$ and $G_1$ be the graphs derived from $(X, \varphi)$ and $(X_1, \varphi_1)$, respectively. Due to Lemma \ref{local_condition_lemma}, we know that each vector $u \in \mathbb{R}^{V(G_1)}$ belongs to $\mathcal{N}(G_1)$ if and only if it represents a solution to the next system of equations:
    \begin{alignat}{2}
        \label{presub_aux1} u\left( a_{j - \gamma} \right) + u\left( c_{j + \frac{n}{2} }\right) + u\left( d_{j + \frac{n}{2} }\right) &= 0 \qquad && (j \in \mathbb{Z}_n) ,\\
        \label{presub_aux2} u\left( c_{j + \frac{n}{2} }\right) + u\left( d_{j + \frac{n}{2} }\right) + u\left( e_{j - \gamma} \right) &= 0 \qquad && (j \in \mathbb{Z}_n) ,\\
        \label{presub_aux3} u\left( d_{j + \gamma }\right) + u\left( e_{j + \frac{n}{2} }\right) + u\left( b_{j + \gamma} \right) &= 0 \qquad && (j \in \mathbb{Z}_n) .
    \end{alignat}
    By combining Equations \eqref{presub_aux1} and \eqref{presub_aux2}, we get $u\left( e_{j - \gamma} \right) = u\left( a_{j - \gamma} \right)$, i.e., $u\left(e_j\right) = u\left(a_j \right)$, for any $j \in \mathbb{Z}_n$. Furthermore, Equation \eqref{presub_aux2} is equivalent to
    \begin{alignat}{2}
        \label{presub_aux4} u\left( c_{j}\right) + u\left( d_{j}\right) + u\left( e_{j + \frac{n}{2} - \gamma} \right) &= 0 \qquad && (j \in \mathbb{Z}_n) ,
    \end{alignat}
    while Equation \eqref{presub_aux3} is equivalent to
    \begin{alignat}{2}
        \label{presub_aux5} u\left( d_{j}\right) + u\left( e_{j + \frac{n}{2} - \gamma}\right) + u\left( b_{j} \right) &= 0 \qquad && (j \in \mathbb{Z}_n) .
    \end{alignat}
    Now, by combining Equations \eqref{presub_aux4} and \eqref{presub_aux5}, we conclude that $u\left(c_j\right) = u\left(b_j\right)$ holds for each $j \in \mathbb{Z}_n$.

    Since $u\left(e_j\right) = u\left(a_j \right)$ and $u\left(c_j\right) = u\left(b_j\right)$, Equation \eqref{presub_aux2} becomes
    \begin{alignat*}{2}
        u\left( b_{j + \frac{n}{2} }\right) + u\left( d_{j + \frac{n}{2} }\right) + u\left( a_{j - \gamma} \right) &= 0 \qquad && (j \in \mathbb{Z}_n) ,
    \end{alignat*}
    hence the system of Equations \eqref{presub_aux1}--\eqref{presub_aux3} is equivalent to the system
    \begin{alignat}{2}
        \label{presub_aux6} u\left(c_j\right) &= u\left(b_j\right) \qquad && (j \in \mathbb{Z}_n) ,\\
        \label{presub_aux7} u\left(e_j\right) &= u\left(a_j \right) \qquad && (j \in \mathbb{Z}_n) ,\\
        \label{presub_aux8} u\left(d_j\right) &= - u\left(a_{j + \frac{n}{2} - \gamma} \right) - u\left(b_j \right) \qquad && (j \in \mathbb{Z}_n) .
    \end{alignat}

    Suppose that $u_0 \in \mathbb{R}^{V(G_1)}$ is any vector that satisfies Equations \eqref{presub_aux6}--\eqref{presub_aux8}. In this case, the local conditions corresponding to all the orbits of $G_1$ different from $c, d, e$ become identical to those corresponding to the orbits of graph $G$. Therefore, if we let $w_0 \in \mathbb{R}^{V(G)}$ be the restriction of $u_0$ to $V(G)$, then it follows that $u_0 \in \mathcal{N}(G_1)$ holds if and only if $w_0 \in \mathcal{N}(G)$. Due to $\dim \mathcal{N}(G) = 1$, we observe that $\dim \mathcal{N}(G_1) = 1$.

    Since $G$ is a nut graph, we know that there exists a full vector $w_0 \in \mathcal{N}(G)$, and it is possible to extend $w_0$ to a uniquely determined vector $u_0 \in \mathbb{R}^{V(G_1)}$ such that Equations \eqref{presub_aux6}--\eqref{presub_aux8} hold for $u = u_0$. It is obvious that $u_0 \in \mathcal{N}(G_1)$ and that $u_0$ has a nonzero entry corresponding to any vertex outside the $c$-, $d$- and $e$-orbits. Due to Equations \eqref{presub_aux6} and \eqref{presub_aux7}, in order to prove that $G_1$ is a nut graph, it is sufficient to show that $u_0\left( d_j \right) \neq 0$ for every $j \in \mathbb{Z}_n$. This follows from Equation~\eqref{presub_aux8} because the $a$- and $b$-orbits of $G$ are of different magnitudes.
\end{proof}

We refer to the construction from Lemma \ref{presub_lemma} as the \emph{pre-subdivision construction}. It can be applied on a $\mathbb{Z}_n$-voltage pregraph that gives rise to a nut graph provided that the pregraph contains two distinct adjacent vertices whose corresponding orbits are of different magnitudes. In fact, this condition holds for all the cubic $7$-circulant nut graphs and cubic $11$-circulant nut graphs from Sections~\ref{sc_7_circ} and \ref{sc_11_circ}, respectively. Besides, if the starting $\mathbb{Z}_n$-voltage pregraph $(X, \varphi)$ satisfies the said condition, then so does the constructed $\mathbb{Z}_n$-voltage pregraph $(X_1, \varphi_1)$. Hence, in this case, the pre-subdivision construction can be iteratively applied on a given starting pregraph. With this in mind, we complete the proof of Theorem \ref{main_th} as follows.

\begin{proof}[Proof of Theorem \ref{main_th}]
We observe that Theorem \ref{circulant_nut_th} and Proposition \ref{nonexist} yield that there exists no cubic $\ell$-circulant nut graph for $\ell \in \{1, 4, 5\}$. Furthermore, the case $\ell = 2$ is resolved in \cite[Proposition 9]{DaBaPiZi2024}. Thus, in order to complete the proof, we only need to show that there exist infinitely many cubic $\ell$-circulant nut graphs for each $\ell \in \{3, 6, 7 \}$ or $\ell \ge 9$.

By Proposition \ref{circ_7_prop}, there are infinitely many cubic $\mathbb{Z}_n$-voltage pregraphs of order seven that give rise to a cubic $7$-circulant nut graph. Each of them satisfies the conditions from Lemma \ref{presub_lemma}, which means that by applying the pre-subdivision construction $t \in \mathbb{N}_0$ times, we obtain infinitely many cubic $(7 + 3t)$-circulant nut graphs. Thus, there are infinitely many cubic $\ell$-circulant nut graphs for each $\ell \in \{7, 10, 13, 16, \ldots \}$. By analogy, it is possible to apply the pre-subdivision construction on the $G^{(11)}(n; \alpha, \beta)$ graphs from Proposition \ref{circ_11_prop}. Therefore, there exist infinitely many cubic $\ell$-circulant nut graphs for each $\ell \in \{11, 14, 17, 20, \ldots \}$. Finally, as shown in \cite[Theorems 2, 11 and 14]{DaBaPiZi2024}, there are infinitely many cubic $\mathbb{Z}_n$-voltage pregraphs of order three that give rise to a nut graph. All of these pregraphs satisfy the conditions from Lemma \ref{presub_lemma}, hence the pre-subdivision construction can now be applied to prove the existence of infinitely many cubic $\ell$-circulant nut graphs for each $\ell \in \{3, 6, 9, 12, \ldots \}$.
\end{proof}

\section{Some open problems}\label{sc_conclusion}

We end the paper by disclosing several open problems regarding cubic polycirculant nut graphs. Theorem \ref{main_th} resolves the cubic $\ell$-circulant nut graph existence problem for each $\ell \in \mathbb{N}$ with the exception of $\ell = 8$. The computational results obtained in \cite{GitHub} suggest that these graphs might not exist. This leads us to the following conjecture.

\begin{conjecture}
    There exists no cubic $8$-circulant nut graph.    
\end{conjecture}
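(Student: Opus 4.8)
The plan is to extend the computer-assisted framework of Section~\ref{sc_maga} with a finer spectral analysis, since the orbit-magnitude tests of Propositions~\ref{test_1_prop} and~\ref{test_2_prop} are only necessary conditions and, as the computations reported in \cite{GitHub} indicate, fail to eliminate every quotient pregraph when $\ell = 8$. First I would enumerate the $Q(8) = 534$ connected cubic quotient pregraphs of order eight and discard all those already excluded by the two tests, leaving a surviving set $S$ of pregraphs $X$ that admit a sign matrix $B$ with $|B| = A(X)$ possessing a positive null vector. The conjecture reduces to showing, for each $X \in S$, that no choice of voltage assignment and modulus turns the derived graph into a nut graph.

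For each surviving $X \in S$, the strategy mirrors Lemmas~\ref{circ_7_lemma} and~\ref{circ_11_lemma}. For an arbitrary $\mathbb{Z}_m$-voltage assignment $\varphi$, the derived graph of order $8m$ has block-circulant adjacency matrix, so for every $m$-th root of unity $\zeta$ one forms the Hermitian $8 \times 8$ quotient matrix $A_X(\zeta)$ whose $(i,j)$ entry is $\sum_{a\colon i \to j} \zeta^{\varphi(a)}$, and $\dim \mathcal{N}(G) = \sum_{\zeta^m = 1} \dim \ker A_X(\zeta)$. Because $A_X(\zeta)$ is Hermitian on the unit circle, any nonreal singular root $\zeta_0$ is accompanied by its conjugate $\overline{\zeta_0}$, forcing nullity at least two; hence a nut graph requires $A_X(-1)$ to be the unique singular quotient matrix and to have a one-dimensional kernel. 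I would then treat $\det A_X(\zeta)$ as a Laurent polynomial in $\zeta$ whose coefficients are polynomials in the voltage parameters and, using the cyclotomic divisibility criterion recalled in Section~\ref{sc_prel}, characterise exactly which voltage assignments make $-1$ the sole root of unity among its roots.

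The final step, carried out pregraph-by-pregraph, is to show that this characterisation is incompatible with fullness: whenever the nullity is forced to equal one at $\zeta = -1$, the induced null vector, reconstructed through back-substitutions analogous to Equations~\eqref{fam_aux8}--\eqref{fam_aux13}, must contain a zero entry for some orbit, so the derived graph fails to be a nut graph by Lemma~\ref{orbit_lemma}. Aggregating the resulting contradictions over all $X \in S$ would then yield the conjecture.

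The hard part will be the per-survivor analysis in the second and third steps. Unlike the existence results of Sections~\ref{sc_7_circ} and~\ref{sc_11_circ}, where a single fortunate voltage assignment suffices, here one must quantify over the entire multivariate family of voltage assignments and all moduli $m$ simultaneously, ruling out every configuration rather than exhibiting one. The difficulty is compounded by the fact that the number-theoretic condition ``$-1$ is the only root of unity among the roots of $\det A_X(\zeta)$'' may hold while the reconstructed null vector remains full for some voltage parametrisations and acquires a zero entry only for others, so the fullness obstruction has to be established uniformly across the whole parameter space. It is precisely this uniform treatment of infinitely many voltage assignments, rather than the enumeration of the finitely many pregraphs, that the orbit-magnitude tests cannot supply and that keeps the statement at the level of a conjecture.
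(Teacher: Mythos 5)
The statement you are addressing is stated in the paper as a \emph{conjecture}: the authors offer no proof, only the observation that the computational search in \cite{GitHub} suggests nonexistence. Your submission is likewise not a proof but a research programme, and you concede as much in your final paragraph --- the decisive steps (the per-survivor analysis of $\det A_X(\zeta)$ over all voltage assignments and all moduli $m$, and the uniform fullness obstruction) are announced as goals rather than carried out. Nothing in the proposal establishes that any particular surviving pregraph fails to give rise to a nut graph, so the conjecture remains exactly where the paper leaves it.

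Beyond the incompleteness, two concrete points in the framework need repair. First, your claim that a nut graph forces $A_X(-1)$ to be the unique singular quotient matrix overlooks $\zeta = 1$: the fiber $A_X(1)$ is the adjacency matrix of the quotient pregraph itself, and a pregraph surviving Proposition~\ref{test_1_prop} may have $\mathcal{N}(X)$ one-dimensional and spanned by a full vector, in which case the unique singular fiber sits at $\zeta = 1$ and the lifted null vector is constant on orbits (this is precisely the situation of the example following Proposition~\ref{test_2_prop}). Your case analysis must therefore cover both $\zeta = \pm 1$. Second, the third step asserts that for every $X \in S$ the reconstructed null vector ``must contain a zero entry,'' but Sections~\ref{sc_7_circ} and~\ref{sc_11_circ} show that for $\ell = 7$ and $\ell = 11$ the analogous condition is perfectly compatible with fullness; you give no structural reason why $\ell = 8$ should behave differently, and indeed back-substitution along the lines of Equations~\eqref{fam_aux8}--\eqref{fam_aux13} generically produces full vectors. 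Without an identified mechanism that kills fullness (or kills the uniqueness of the singular fiber) uniformly over the infinite family of voltage assignments for each survivor, the argument cannot close, and the statement rightly remains a conjecture.
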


Another existence problem tightly connected to Theorem \ref{main_th} is the $d$-regular polycirculant nut graph degree--order--orbit existence problem. In other words, what are all the possible orders that a $d$-regular $\ell$-circulant nut graph can have for a given $d, \ell \in \mathbb{N}$? Let $\mathfrak{N}^d_\ell \subseteq \mathbb{N}$ denote the set of all the $n \in \mathbb{N}$ for which there exists a $d$-regular $\ell$-circulant nut graph of order $n\ell$. The degree--order--orbit existence problem can now be phrased as follows.

\begin{problem}
    Determine $\mathfrak{N}^d_\ell$ for every $d, \ell \in \mathbb{N}$.
\end{problem}

From the results of the present paper, we may conclude that
\[
    \mathfrak{N}^3_1 = \mathfrak{N}^3_2 = \mathfrak{N}^3_4 = \mathfrak{N}^3_5 = \varnothing,
\]
while
\[
    \mathfrak{N}^3_7, \mathfrak{N}^3_{10}, \mathfrak{N}^3_{13}, \ldots \supseteq \{4, 6, 8, 10, 12, \ldots \}
\]
and
\[
    \mathfrak{N}^3_{11}, \mathfrak{N}^3_{14}, \mathfrak{N}^3_{17}, \ldots \supseteq \{6, 10, 14, 18, 22, \ldots \}.
\]
The relation $n \in \mathfrak{N}^3_\ell$ is left to be inspected for certain remaining even $n$ and various $\ell$, and for all the odd $n$ whenever $\ell$ is even.

\section*{Acknowledgements}

The authors would like to express their gratitude to Tomaž Pisanski for suggesting the idea behind the research topic. The work of Nino Bašić is supported in part by the Slovenian Research Agency (research program P1-0294). Ivan Damnjanović is supported by the Science Fund of the Republic of Serbia, grant \#6767, Lazy walk counts and spectral radius of threshold graphs --- LZWK.

\end{document}